\documentclass{amsart}
\usepackage{pdfsync}
\usepackage{microtype}
\usepackage{libertinus} 
\usepackage[capitalise]{cleveref} 
\usepackage{cite} 
\usepackage{tikz} 
    \usetikzlibrary{backgrounds,shadows.blur}
\usepackage{adjustbox}
\usepackage{floatflt} 
\usepackage[margin=8pt,font=footnotesize,labelfont=bf,labelsep=endash]{caption} 
\newtheorem{theorem}{Theorem}
\newtheorem{lemma}[theorem]{Lemma}

\newtheorem{define}[theorem]{Definition}
\newtheorem{proposition}[theorem]{Proposition}
\newtheorem{corollary}[theorem]{Corollary}
\newtheorem{definition}[theorem]{Definition}

\newtheorem{remark}[theorem]{Remark}
\newtheorem{question}[theorem]{Question}
\numberwithin{theorem}{section}



\DeclareMathOperator{\Extg}{Ext_{\Gamma}^{-1}}
\DeclareMathOperator{\Ind}{Ind}

\newcommand{\C}{\mathbb{C}} 
\newcommand{\complex}{\C}
\newcommand{\Cv}{\ensuremath{C_\upsilon  (M)}}
\newcommand{\closure}[2][3]{
        {}\mkern#1mu\overline{\mkern-#1mu#2}}
\newcommand{\compact}{\ensuremath{\mathcal K}}
\newcommand{\compacts}{\compact}
\newcommand{\Cstar}[1][]{\textsl{C*}{#1}} 

\newcommand{\E}{\mathcal E} 
\newcommand{\Id}{\text{Id}}
\newcommand{\idealpoint}{\ensuremath{\{\inf\}}}
\newcommand{\inv}{^{-1}}
\newcommand{\kasparovproduct}{\tensor} 
\newcommand{\KE}{\compact(E)} 
\newcommand{\KK}{{\textsl{KK}}} 
\newcommand{\KKg}{\ensuremath{{\KK_\Gamma}}}

\newcommand{\adjointables}{\mathcal L}
\newcommand{\LE}{\adjointables (E)} 

\newcommand{\Mbar}{\ensuremath{\closure{M}}}

\newcommand{\Mult}{\mathcal{M}}

\newcommand{\R}{\mathbb{R}} 
\newcommand{\tensor}{\otimes}
\renewcommand{\H}[1]{{\ensuremath{\mathbb H^{#1}} }}  
\renewcommand{\inf}{\infty} 
\renewcommand{\L}[1]{{\ensuremath{{ L}^{#1}} }} 
\renewcommand{\S}[1]{{\ensuremath{S^{#1}} }}  
\renewcommand{\star}{\phantom{}^{*}}
\newcommand{\Z}[1][]{\mathbb Z_{#1}} 
\usepackage{mathtools}
\DeclarePairedDelimiterX\ip[2]\langle\rangle{
	\ifblank{#1}{\ifblank{#2}{\:\cdot\:,\:\cdot\:}{\:\cdot\:,#2}}{\ifblank{#2}{#1,\:\cdot\:}{#1,#2}}
}

\newcommand{\CurvedTrianglePic}{
\begin{tikzpicture}[auto,   background rectangle/.style={fill=gray!27},show background rectangle,every shadow/.style={shadow opacity=23,
shadow blur steps=9}]]
\node (ideal_infty) at (0,2) [circle,fill,inner sep=0.7] {};
\node (n1) at (-0.7,-0.12) [circle,fill,inner sep=0.8]  {};
\node (n2) at (0.7,-0.12) [circle,fill,inner sep=0.8]  {};
\node (left_corner) at (-1.5,0) {};
\node (right_corner) at (1.5,0) {};
\shadedraw [blur shadow={shadow yshift=-3ex}] (ideal_infty)  to [bend left,very thin]  (left_corner)
  to [bend right=5 ]  (n1) 
  to [bend right=5 ]  (n2)
  to [bend right=5 ]  (right_corner)
  to [bend left, very thin]  (ideal_infty);                                
\draw[densely dotted,in=-90,out=60]  (n1) to  (ideal_infty); 
\draw[densely dotted,in=-90,out=120]   (n2)  to    (ideal_infty); 
\draw (ideal_infty) node[above] {$\scriptscriptstyle \idealpoint$};
\node at (0,0.6) {$\scriptstyle M$}; 
\node at (n1) [below ]{$\scriptstyle n_1$}; 
\node at (n2) [below ]{$\scriptstyle n_2$};
\end{tikzpicture}      }
\begin{document}
\title{A short proof of an index theorem, II}
\author{Y. Abdolmaleki and D. Kucerovsky}

\keywords{KK-theory, complete CAT(0)-manifold, Hadamard manifold}
\maketitle \begin{quotation}\ {\small {\bf Abstract:} We introduce a slight modification of the usual equivariant $KK$-theory.
   We use this to give a $KK$-theoretical proof of
  an equivariant index theorem for Dirac-Schr\"{o}dinger operators on a non-compact manifold of nowhere positive curvature. We incidentally show that the boundary of Dirac is Dirac; generalizing earlier work of Baum and coworkers, and a result of Higson and Roe. }
\end{quotation}
\section{Introduction}

In this paper we define a form of approximate equivariance in $\KKg$-theory (\cref{def:Gadmissible.approx,prop:approx.equiv.phi}), and apply it to make the proof of Anghel's theorem in \cite{shortproof} equivariant. Some exposition has been added, in order to enhance the readability of this paper and to motivate and explain the techniques used. The main technical tools are presented in \cref{sect:approx.equivariance,sect:virtual.spaces}. The purpose of these tools is to assist in modifying non-equivariant proofs in $\KK$-theory to the equivariant case, and the non-equivariant proof that we use as a case study is that of Anghel's theorem in \cite{shortproof}. The tools are demonstrated in two slightly different settings:   hyperbolic space  with $SO(n)$ acting by isometries, and a Hadamard manifold with a discrete amenable group acting by isometries (\cref{sect:Hadamard,sect:hyperbolic.space}).  The main application is, as already mentioned, to Anghel's theorem.

 We begin with an extended introduction of and discussion of Anghel's index theorem.
Index theorems, generally speaking, express an analytical index in
terms of topological information. An analytical index is usually some
generalization of the classical Fredholm index of a Fredholm operator,
and the relevant topological information is usually given by (the
cohomological image of) a $K$-theory $/$ $K$-homology pairing. One example is the Atiyah-Singer index
theorem \cite{ATSIT}, and another is in  the cases where the
Baum-Connes conjecture \cite{BCH} holds.

Anghel's index theorem is an index formula for the Fredholm index of a Dirac-Schr\"{o}dinger
operator. These operators are of the form $D+iV,$ where $D$ is a
(generalized) Dirac operator and $iV$ is a skew-adjoint order zero
operator, acting on the complex $\L2$-sections of some bundle.
The reason for being interested in an equivariant form of Anghel's theorem  is that an equivariant index is much more sensitive than the usual Fredholm index. The Fredholm index is integer-valued, while the equivariant index takes values in the representation ring of a group.

The classic Anghel's theorem was first proven by Callias \cite{callias} for the case of Euclidean space, and
then in the case of warped cones by Anghel \cite{anghel} :

\begin{theorem}[\bf Anghel's theorem]\label{th:original.Anghel's}
  Let $D+iV$ be a Dirac-Schr\"{o}dinger
  operator over a warped cone $M$ with compact even-dimensional base
  $N.$ If $V^2$ becomes arbitrarily large outside a compact subset of $
    M,$
  and $[D,V]$ is bounded, then $D+iV$ is Fredholm, with index given by
  \[\int_N\hat {A}(TN)\wedge\mbox{\rm ch}V^{+}d\mbox{(\rm vol}_N),\]
  where $\hat {A}$ denotes Atiyah's $A$-genus, and $V^{+}$ is the positive eigenbundle
  of $V$ over a copy of $N$ contained in a neighborhood of infinity such that $
    V$ is invertible in that neighborhood.
\end{theorem}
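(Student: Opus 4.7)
The approach is $\KK$-theoretic: I aim to represent the class $[D+iV]$ as a Kasparov product of two classes read off from the warped-cone geometry, and then apply Atiyah-Singer on the compact base $N$.

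First, Fredholmness. The Weitzenb\"ock-type identity $(D+iV)^*(D+iV)=D^*D+V^*V+i[D,V]$, combined with the hypotheses that $V^2$ is eventually large and $[D,V]$ is bounded, gives the coercivity bound
\[(D+iV)^*(D+iV)\ \geq\ V^*V - \|[D,V]\|\cdot\Id\]
outside a compact set; together with the local elliptic regularity of $D$ this yields the Fredholm property, producing a well-defined class $[D+iV]\in\KK(\complex,\complex)\isom\Z$ whose value is the Fredholm index.

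The heart of the proof is to establish the factorisation
\[[D+iV]\ =\ [V^+]\kasparovproduct_{C(N)}[D_N],\]
where $[D_N]\in K_0(N)$ is the $K$-homology class of the Dirac operator on the compact even-dimensional base and $[V^+]\in K^0(N)$ is the positive-eigenbundle class of $V$ on a cross-section at infinity. The warped-cone structure is the key input: near infinity, $M$ is diffeomorphic to $N\times[R,\inf)$ with a warped metric, so $V$ admits, up to compact perturbation, the model form $|V|(\varepsilon^+-\varepsilon^-)$ with $\varepsilon^{\pm}$ the projections onto the eigenbundles of $V$, giving a natural comparison cycle over $C(N)$. The main obstacle is then the identification \emph{``the boundary of Dirac is Dirac''}: under the boundary map attached to the short exact sequence compressing $M$ to its cross-section at infinity, the $\KK$-class of the Dirac operator on $M$ must map to $[D_N]$. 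I would verify this by exhibiting an explicit unbounded Kasparov cycle (with $C([R,\inf))$-action given by the radial variable) and invoking Kucerovsky-type sufficient conditions to recognise the Kasparov product; the warping function enters only as a positive scalar weight and so does not affect the $\KK$-class.

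Once the factorisation is in hand, Atiyah-Singer on the compact even-dimensional manifold $N$ evaluates $[V^+]\kasparovproduct[D_N]$ as the twisted index of $D_N$:
\[\Ind(D+iV)\ =\ \Ind(D_N\text{ twisted by }V^+)\ =\ \int_N\hat A(TN)\wedge\mbox{\rm ch}\,V^+\,d(\mbox{\rm vol}_N),\]
which is the claimed formula. The equivariant refinement proceeds by the same outline but with every step carried out in the approximately-equivariant $\KKg$-theory developed earlier in the paper; the genuine analytic difficulty is concentrated in the boundary-of-Dirac step, after which the rest is formal.
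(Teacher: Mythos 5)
Your outline is correct and follows essentially the same route as the paper: the paper itself does not reprove this classical statement (it defers to Anghel's original differential-geometric proof and to the $\KK$-theoretic proof in the cited ``short proof'' reference), but its entire machinery --- the cycle $[r]$, the ``boundary of Dirac is Dirac'' identification via an explicit unbounded Kasparov cycle and Kucerovsky-type product criteria, and the factorisation $[D_M+iV]=[f]\kasparovproduct_{C(N)}[D_N]$ followed by Atiyah--Singer on the compact base $N$ --- is exactly the strategy you sketch. The one step you flag as the main obstacle is indeed where the work lies, and the paper resolves it by a dimension-drop Hilbert-module homotopy between $L^2(M)$ and $L^2(N)$ cycles, which your proposal gestures at but does not spell out.
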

The above theorem
applies to \emph{warped cones.} These are manifolds which
are isomorphic outside a compact set to $\R^{+}\times N$ with Riemannian
metric $dr^2+f(r)^2\tilde {g},$ where $\tilde {g}$ is the Riemannian metric of the compact
manifold $N,$ and $f$ is a nondecreasing function $f:{\R^+}\to
  {\R}^{+}.$ We remark that a hyperbolic space $H^{n+1}$ is naturally a warped cone, and locally a warped product, of the form $[0,\infty)\times S^{n}$ with metric $dr^2+\sinh^2(r) \tilde{g}.$
Anghel's original proof of his theorem \cite{anghel} used differential geometry, and a $\KK$-theoretical proof, which we will generalize in our work here, can be found in \cite{shortproof}.

\section{Formulation of a Anghel-type theorem}
Our aim  is to first prove that a certain natural exact sequence maps the class of a Dirac operator in equivariant $K$-homology to the class of a Dirac operator, and thus to prove a simple version of an equivariant Anghel-type theorem. However, one  has to formulate the problem appropriately because otherwise the problem may either lack applications or lack a solution.

Suppose that a manifold $M$ is a warped cone, as defined just above, whose collar is of the form $(1,\infty)\times N$ for some compact manifold $N.$ We can define a \Cstar-algebra $\Cv$ consisting of those continuous bounded functions on $M$ whose radial limits at infinity exist uniformly.
In \cite{shortproof} there is a proof of an Anghel-type theorem, in the non-equivariant case, based on the cyclic exact sequences induced in $KK$-theory by the semisplit exact sequence
\begin{equation} 0\to C_0(M)\to \Cv\to C(N)\to 0.\label{ses:Cv}\tag{2}\end{equation}
In this short exact sequence the quotient map is the operation of taking the limit as the radial variable $r$ goes to infinity.
The middle term can be viewed as continuous functions on a compactification, $\Mbar,$ of $M.$ This particular compactification, $\Mbar,$ is a compact topological manifold with boundary. Since we want to have a discrete group acting by isometries on the manifolds $M$ and $N,$ it is reasonable to first look at the case where $M$ is a space of constant curvature, see \cite{doCarmo} for more information on these.
We will thus first focus on the case of hyperbolic space for simplicity in exposition, but then give a complete proof for the general case of a warped cone of nowhere positive curvature.

We will show the following two facts:
\begin{enumerate}\item \textbf{(The boundary of Dirac is Dirac)} The short exact sequence \eqref{ses:Cv} maps the $K$-homology cycle associated with a Dirac operator $D_M$ on $M$ to a  $K$-homology cycle associated with a Dirac operator $D_N$ on $N,$ and\label{item:bdd}
  \item \textbf{(Equivariant Anghel)} for suitable classes of approximately equivariant potentials $V$ over $M$ we have $[D_M +iV]=[f]\kasparovproduct_{C(N)}[D_N] ,$ where $f$ is a $K$-theory element over $N$ and the equality is in the representation ring  $\KKg(\C,\C).$
\end{enumerate}
The proof of the first fact will involve an interesting construction of an equivariant $\KK$-group that is only approximately equivariant but defines the same group as the usual definition. We remark that the term used above, that the boundary of Dirac is Dirac, originates with  Baum, Douglas, and Taylor \cite[Sect. 4.5]{BDT}. They construct a geometrical $K$-homology theory in which it is a fundamental fact that property \ref{item:bdd} above holds, and they show this for both the non-equivariant case and then for the case of equivariance under a compact Lie group \cite{BDT,baum,higsonBoundary}.

The next two sections give some preparatory constructions, and we return to the above topics in \cref{sect:boundary.of.Dirac}.

\section{Approximate equivariance in $\KKg (A,B)$}\label{sect:approx.equivariance}

Generally, the new issues that may arise when adding a group action to a non-equivariant $\KK$-theoretical proof have to do with the existence of cycles.
Thus we begin with some remarks on approximately equivariant cycles.

Following \cite[Chapter 8]{kucerovsky.thesis},
recall that a locally compact group $\Gamma$ is said to have an action
on a graded \Cstar--algebra $B$ if there is a homomorphism $\alpha$ from $
  \Gamma$ into
the degree zero $*$--automorphisms of $B.$ 
The group acts on a Hilbert
$B$--module $\E$ by a homomorphism, also denoted $\alpha ,$ into the
invertible bounded linear transformations on
$\E$ as a Banach space, with $\alpha_g(eb)=\alpha_g(e)\alpha_g(b),$
$\alpha_g\langle x,y\rangle =\langle\alpha_gx,\alpha_gy\rangle .$ The space of Hilbert module operators is denoted $\adjointables(E)$ and the ideal of operators that are compact in the Hilbert module sense is denoted $\compacts(E).$
Given a representation $\phi$
of a \Cstar--algebra $A$ on a Hilbert module $E,$  denote by
$I_{\phi}$ the algebra of operators on $E$ that commute with $\phi$
modulo compact operators, and by $J_{\phi}$
the algebra of operators $L\in \LE$ with $\phi (a)L$ and $L\phi
  (a)$
compact for all $a\in A.$
We recall Kasparov's well-known definition of the set of bounded equivariant Kasparov cycles, $
  {\E}_\Gamma(A,B),$ for
$\sigma$--unital \Cstar--algebras $A$ and $B$ and a second--countable locally
compact group $\Gamma$ :
\begin{define}\label{def:KKG}\cite{kasp3} The set
  ${\E}_\Gamma(A,B)$ is given by triples $(\E,\phi ,F)$ such that:
  \begin{enumerate}\item[$i)$]\ $\E$ is a countably generated $\Z[2]$--graded
    Hilbert $B$--module with a continuous degree zero action of\, $\Gamma$;
    \item[$ii)$]\ the map $\phi$ is a graded $*$--homomorphism
    $\phi :A\to \LE$;
    \item[$iii)$]\ there is an action of $g$ on $A$ such that
    $\alpha_g(\phi (a))=\phi (\alpha_g(a))$;
    \item[$iv)$]\ the operator $F$ is such that $g\mapsto\phi (a)\alpha_g(F)$ is norm--continuous;
    \item[$v)$]\ the degree one operator $F$ is in $I_{\phi},$ and is such
    that $F^2-1,$ $F-F\star,$ and $\alpha_g(F)-F$ are in $J_{\phi}.$
  \end{enumerate}\end{define}

The following Theorem shows that the condition $iii$ can be replaced by the weaker condition:
\begin{enumerate}\item[$iii')$]\textit{ there is an action of $g$ on $A$ such that
  $\alpha_g(\phi (a)) - \phi (\alpha_g(a))\in \KE.$}\label{eqn:weaker.equivariance}
\end{enumerate}

\begin{theorem} For a discrete amenable group $\Gamma,$ we obtain the same Kasparov group $\KKg(A,B)$ from either of the sets of conditions: $i,ii,iii,iv,$ and $v$ \emph{or}  $i,ii,iii',iv,$ and $v.$ \label{prop:approx.equiv.phi}\end{theorem}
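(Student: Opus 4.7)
The forward direction — that every cycle satisfying (i, ii, iii, iv, v) automatically satisfies (i, ii, iii', iv, v) — is immediate, giving a natural forgetful map from the usual $\KKg(A,B)$ to the group defined by the weaker conditions. I plan to exhibit this map as a bijection by constructing, in every iii'-equivalence class, a representative satisfying iii, and by treating homotopies of iii'-cycles in the same way.

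The starting observation is that condition iii' amounts precisely to the statement that the induced homomorphism $\bar\phi : A \to \LE/\KE$ into the generalized Calkin algebra is strictly $\Gamma$-equivariant; conditions iv and v are themselves sensitive only to $F$ and $\phi$ modulo $\KE$. The content of the theorem thus reduces to lifting equivariant data at the Calkin level back to an exactly equivariant cycle in $\LE$, and this is where amenability of $\Gamma$ enters.

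Concretely, I would fix a F\o{}lner sequence $\{\Lambda_n\}$ for $\Gamma$ and, after absorbing a degenerate cycle carrying a regular representation, assume the action on $\E$ is spatially implemented by unitaries $U_g$. For each $g\in\Gamma$, the conjugate homomorphism
\[
\phi_g(a) := U_g^{-1}\,\phi(\alpha_g(a))\,U_g
\]
is a genuine $*$-homomorphism which, by iii', differs from $\phi$ only by compacts and reduces on the Calkin quotient to the common equivariant map $\bar\phi$. Form a block-diagonal cycle on $\E \otimes \ell^2(\mathbb{N})$ whose $n$-th block aggregates the cycles $(\E, \phi_g, F)$ for $g$ ranging over $\Lambda_n$, with the translation action of $\Gamma$ on the indexing set encoding the would-be equivariance. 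A telescoping-rotation homotopy — built from the asymptotic translation-invariance supplied by the F\o{}lner condition — then interpolates this block-diagonal cycle to a genuinely equivariant one in the same iii'-class. Multiplicativity of the resulting homomorphism is preserved throughout because we never add homomorphisms inside a fixed $\LE$; we only take direct sums and perform unitary interpolations on the stabilization.

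The principal obstacle is exactly the reason a naive averaging $|\Lambda_n|^{-1}\sum_{g\in\Lambda_n}\phi_g$ inside $\LE$ fails: a sum of $*$-homomorphisms is not a $*$-homomorphism. This is bypassed by realizing the average spatially as a direct sum on a stabilization and then converting the $n\to\infty$ invariance into exact equivariance by a rotation homotopy, while verifying at each stage that the Fredholm-type conditions iv and v remain intact; the latter verification reduces to tracking how the compact non-equivariance defects propagate through the direct sum. Injectivity of the forgetful map follows by applying the same construction to a iii'-homotopy, regarded as a iii'-cycle over $A \otimes C[0,1]$.
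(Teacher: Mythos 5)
Your opening reduction is the right one---condition $iii'$ says exactly that the induced map $\bar\phi\colon A\to\LE/\KE$ is genuinely equivariant, so the problem is to lift equivariant Calkin-level data back to the level of $\LE$---and this is also where the paper starts. But your lifting mechanism has a genuine gap at its central step. First, the infinite block-diagonal cycle on $\E\otimes\ell^2(\mathbb N)$ built from the conjugates $\phi_g$, $g\in\Lambda_n$, is not obviously a Kasparov cycle at all: each block satisfies $\phi_g(a)(F^2-1)\in\KE$, but an infinite direct sum of compacts is compact only if the norms tend to zero, and nothing in the construction supplies that decay; one would need a mapping telescope with cutoff functions, not a bare direct sum. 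Second, and more seriously, the ``telescoping-rotation homotopy'' that is supposed to convert F\o{}lner asymptotic invariance into exact equivariance is precisely the hard point and is only asserted. The F\o{}lner property gives that $\alpha_h$ permutes the index set $\Lambda_n$ up to a boundary of relative size $o(1)$, so the equivariance defect of the block cycle is implemented by partial shift isometries on $\ell^2(\Lambda_n)$---unitarily large, not compact---and a rotation homotopy (whose job is to prove $x\oplus y\sim y\oplus x$ or to absorb degenerate cycles) does not manufacture equivariance from approximate invariance. Carrying this out would amount to reproving, spatially, the averaging theorem you are implicitly relying on. (A smaller issue: in the Hilbert-module setting the $\Gamma$-action is not $B$-linear, so ``spatially implemented by unitaries $U_g$ in $\LE$'' already needs justification.)

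The paper takes a different and much shorter route: under the isomorphism $(E,\phi,F)\mapsto\pi(F\phi(\cdot)F\star)$ of $\KKg^1(A,B)$ with $\Extg(A,B)$, cycles satisfying $iii$ and $iii'$ produce the \emph{same} Busby map, so the only remaining issue is whether the resulting extension is \emph{equivariantly} semisplit. That is exactly where amenability enters, via Baaj--Skandalis \cite[Prop.\ 7.13, 7.16]{BS}: for a discrete amenable group, a completely positive splitting can be averaged to an equivariant one. The decisive difference from your plan is that the averaging there is applied to the completely positive splitting $a\mapsto F\phi(a)F\star$, and convex combinations of completely positive maps are again completely positive---precisely sidestepping the ``sums of $*$-homomorphisms are not $*$-homomorphisms'' obstruction that your spatial construction attempts, without success as written, to circumvent. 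If you wish to keep your framework, the honest repair is to quote (or reprove) the Baaj--Skandalis averaging in place of the rotation homotopy.
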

\begin{proof}
  Recall that there is a group of $\Gamma$-extensions, usually denoted  $\Extg(A,B).$ It consists of equivariant Busby maps, modulo an equivalence relation, see \cite{thomsen} for more information. These extensions are required to be equivariantly semisplit, as will be explained.
  There is a well-known isomorphism of $\KKg^1 (A,B)$ onto $\Extg(A,B),$ of the form $(E,\phi,F)\mapsto\pi(F\phi(\cdot)F\star),$ where $\pi$ is the natural quotient map $\pi\colon \LE\to \LE/\KE.$ This isomorphism takes a Fredholm module to a Busby map, and
  we could choose to define the equivalence relations on $\KKg^1 (A,B)$ through pulling back the equivalence relations
  from $\Extg(A,B).$ If we take this point of view, then it is clear that the difference between the two conditions on Fredholm modules:
  \textit{
        \begin{enumerate}\item[$iii')$]\ there is an action of $g$ on $A$ such that
                                $\alpha_g(\phi (a)) - \phi (\alpha_g(a))\in \KE$
                                          {\rm and}
                          \item[$iii)$]\ there is an action of $g$ on $A$ such that
                                $\alpha_g(\phi (a)) = \phi (\alpha_g(a))$
        \end{enumerate}
        }
  disappears under the equivalence relations, because in either case we have
  $$ \pi(F(\phi(\alpha_g(a)))F\star)=\pi(F(\alpha_g(\phi(a)))F\star) .$$

  It remains to check the often delicate semisplitting property of the extension that we obtain under the above isomorphism, while using the weaker condition $iii'.$ We must verify that the extension obtained has an equivariant semisplitting. The usual candidate for such a semisplitting is the map  $a\mapsto F\phi(a)F\star,$ and with  the condition $iii',$ we can immediately only conclude that we have a semisplitting, $a\mapsto F\phi(a)F\star.$  Baaj and Skandalis have however shown that in the discrete and amenable case, the existence of a semisplitting implies the existence of an equivariant semisplitting , see \cite[prop. 7.13(2)]{BS},  and  \cite[prop. 7.16]{BS}.) 
\end{proof}

For the reader's convenience we recall our earlier definition of equivariant unbounded cycles in \cite[Def. 8.7]{kucerovsky.thesis}, and \cite[Def. 4.7, p. 272]{kucerovsky.kaspcon4}.

\begin{define}\label{def:Gadmissible}The set of unbounded
  equivariant Kasparov
  modules $\Psi_\Gamma (A,B)$ is given by triples $(E,\phi ,D)$ where $E$ is a
  Hilbert $B$--module with $\Gamma$-action; $\phi :A\to {\mathcal L}
    (E)$ is a
  $*$--homomorphism; and $D$ is an
  unbounded regular degree one self--adjoint operator on $E$, such
  that:
  \begin{enumerate}\item[$i)$]  for each $e\in E$, the map \(g\mapsto\left\{D-\alpha_g(D)\right\}e\)
      is continuous as a map from $\Gamma$ into $E$;
      \item[$ii)$]\ the operator $(i+D)\inv$ is in $J_{\phi}$;
    \item[$iii)$] the homomorphism $\phi$ satisfies $\alpha_g(\phi (a)e)=\phi (\alpha_g(a))e$; and
      \item[$iv)$]\ for all $a$ in some dense subalgebra of $A,$ the
      commutator $[D,\phi (a)]$ is bounded on the domain of $D$.
  \end{enumerate}\end{define}
Pointwise norm $\Gamma$-continuity (strong
continuity in the Hilbert module sense) is all that is needed in part $
  i$ of the above definition. This definition behaves correctly under the bounded transform, and the
unbounded connection conditions for a Kasparov product still hold and have the same form in the equivariant case as in the non-equivariant case \cite{kucerovsky.kaspcon4}. Because passing to the bounded transform \cite{BJ} only affects the operator $D,$ we  can replace, in \cref{def:Gadmissible} above, the condition $iii$ by the condition $iii'$ (from  page \pageref{eqn:weaker.equivariance})  using  \cref{prop:approx.equiv.phi} applied to the bounded transforms.\par
The following Definition and Corollary summarize our discussion:

\begin{definition} \label{def:Gadmissible.approx}The set of unbounded approximately
  equivariant Fredholm
  modules, $\Psi_\Gamma(A,B),$ is given by triples $(E,\phi ,D)$ where $E$ is a (graded)
  Hilbert $B$--module with $\Gamma$-action; $\phi :A\to\LE$ is a
  $*$--homomorphism; and $D$ is an
  unbounded regular degree one self--adjoint operator on $E$, such
  that:
  \begin{enumerate}\item[$i)$] for each $e\in E$, the map \(g\mapsto\left\{D-\alpha_g(D)\right\}e\)
      is continuous as a map from $\Gamma$ into $E$;
      \item[$ii)$]\ the operator $(i+D)^{-1}$ is in $J_{\phi}$;
    \item[$iii')$] the homomorphism $\phi$ satisfies $\alpha_g(\phi (a))-\phi (\alpha_g(a
          ))\in\KE,$ and
      \item[$iv)$]\ for all $a$ in some dense subalgebra of $A,$ the
      commutator $[D,\phi (a)]$ is bounded on the domain of $D$.
  \end{enumerate}  \end{definition}
  An alternative form of condition $i$ above is given in \cref{def:Gadmissible.approx.locally.bounded.form} in the Appendix.
   The above definition does not require amenability.  In the presence of amenability, \cref{prop:approx.equiv.phi} implies
 \begin{corollary}  In the discrete and amenable case, $\Psi_\Gamma (A,B)$ is isomorphic to the usual $\KKg(A,B)$ group.\end{corollary}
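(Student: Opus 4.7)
The plan is to deduce the Corollary from \cref{prop:approx.equiv.phi} via the bounded transform.

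First I would take an unbounded approximately equivariant cycle $(E,\phi,D)\in\Psi_\Gamma(A,B)$ and pass to its bounded transform $(E,\phi,F)$ with $F=D(1+D^2)^{-1/2}$. Since the bounded transform modifies only the operator $D$, leaving $\phi$ untouched, the approximate-equivariance condition $iii'$ is inherited verbatim from the unbounded cycle to the bounded cycle. The non-equivariant Kasparov conditions $F^2-1\in J_\phi$, $F-F^\star\in J_\phi$ are standard consequences of $(i+D)^{-1}\in J_\phi$, as in \cite{BJ}, and the equivariant condition $\alpha_g(F)-F\in J_\phi$ follows from the strong continuity hypothesis $i$ on $D$ together with the fact that functional calculus with the resolvents $(i+D)^{-1}$ converts strong continuity of $D$ into norm continuity of bounded functions of $D$ modulo $J_\phi$. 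Thus the bounded transform lands in the set of bounded Kasparov cycles satisfying the weakened hypothesis list $i,ii,iii',iv,v$ of \cref{def:KKG}.

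Next I would invoke \cref{prop:approx.equiv.phi} to identify this set of cycles, modulo the usual operator-homotopy/compact-perturbation/unitary equivalence relations, with $\KKg(A,B)$. Conversely, every bounded equivariant Kasparov cycle is known to arise as the bounded transform of some unbounded cycle (see e.g.\ \cite{BJ}), and that lift can be chosen so as to preserve condition $iii'$ since the lift leaves the representation $\phi$ unchanged. This gives the map in the opposite direction.

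Finally I would check that the equivalence relations match up. The bounded transform is natural under direct sums, compact perturbations, and operator homotopies, so homotopies of unbounded approximately equivariant cycles give homotopies of bounded approximately equivariant cycles, and conversely a homotopy of bounded cycles lifts to a homotopy of unbounded ones up to the usual equivalences. Together with the preceding paragraph, this yields a well-defined bijection $\Psi_\Gamma(A,B)\to\KKg(A,B)$, which is additive because direct sums are compatible with the bounded transform.

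The main obstacle is the last step rather than the first: verifying that the bounded-transform correspondence respects the equivariant equivalence relations when only condition $iii'$ is assumed. This is where the discreteness and amenability of $\Gamma$ are used essentially, via the Baaj--Skandalis result \cite[prop.~7.13(2), prop.~7.16]{BS} invoked in the proof of \cref{prop:approx.equiv.phi}, to upgrade a non-equivariant semisplitting to an equivariant one at the level of the associated extensions. Once this point is granted for bounded cycles, the analogous conclusion for unbounded cycles is purely formal.
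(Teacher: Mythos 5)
Your proposal is correct and follows essentially the same route as the paper: the paper's argument is precisely that the bounded transform leaves $\phi$ untouched, so condition $iii'$ carries over, and then \cref{prop:approx.equiv.phi} (with the Baaj--Skandalis equivariant semisplitting in the discrete amenable case) identifies the resulting group with $\KKg(A,B)$. You simply spell out the surjectivity of the bounded transform and the compatibility with the equivalence relations, which the paper leaves implicit.
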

 By assuming or omitting invertibility one can avoid the amenability condition.

\begin{remark} If three cycles satisfy the usual connection conditions when the group action is forgotten, then they also form a Kasparov product in the equivariant case.\end{remark}

\section{A delocalized $\L{2}$-space and amenablity}\label{sect:virtual.spaces}
In this section we construct some convenient Hilbert spaces that will be needed in the proofs of our theorems (specifically, they will be used to define $K$-homology cycles).
Recall the unital exact sequence of $\Cstar$-algebras from the introduction, namely:
\begin{equation*} 0\to C_0(M)\to \Cv\to C(N)\to 0.\end{equation*}
This extension is semisplit by the Choi-Effros theorem. If we now equip this extension with a group action that is compatible with the inclusion map, quotient map, and semisplitting map, it will induce \cite[Theorem 7.17]{BS}  cyclic exact sequences in equivariant $KK$-theory. Fortunately, it is not necessary to explicitly verify compatibility of the group action with the semisplitting:   by Section 7 of \cite{BS}, see also \cite{thomsen.preprint}, we can average an non-equivariant semisplitting, thus obtaining an equivariant semisplitting. Due to the expander-based counterexample in \cite[Section 7]{CounterExample}, some mild condition on the extension or on the group (for example, amenability) seems to be needed for such a semisplitting. For an example of an explicit equivariant semisplitting, please see the proof of Theorem 3.4 in \cite{mesland2020}.


  \begin{remark}[\textbf{On Dirac operators}]\label{rem:Dirac.ops}{\rm  The topological manifold with boundary $\Mbar$ naturally inherits a metric from $M,$ and this metric is singular at the boundary.  Bismut and Cheeger \cite{BismutCheeger} constructed the natural spinor Dirac operator $D_{\Mbar}$ on a spin$^c$ manifold with boundary and a radially singular metric at the boundary. Their operator is singular at the boundary. The  spin bundle associated to this singular operator $D_{\Mbar}$ restricts to a  spin bundle, say $E,$ over $M,$ and since Dirac operators do not increase support, we obtain a convenient Dirac operator $D_M$ on $M.$ Bismut and Cheeger construct at the same time a related Dirac operator $D_N$ on the boundary. These operators are given by local formulas of the expected form:
\begin{eqnarray*}
  D_{\Mbar} &=& \sum_{i=1}^{n} e_i' \nabla_{e_i'}^{\Mbar}  \\
  &\mbox{and}&\\
  D_N &=& \sum_{i=1}^{n-1} e_i \nabla_{e_i}^N.
\end{eqnarray*}
  The above are formulas 1.6 and 1.7 from \cite{BismutCheeger}. For more information, please see \cite[pgs. 319-324]{BismutCheeger}, or for general information on Dirac operators see \cite{GilbertMurray}.
} \end{remark}
To define cycles in $K$-homology we need $L^2$-spaces. Let us use the notation $L^2 (M,E)$ for an $L^2$ space with coefficients in a vector bundle $E.$ The following Proposition will be applied to the spinor bundle associated with a Dirac operator, but can as well be stated in a more general form.
\newcommand\source[1]{%
    \tikz[remember picture,baseline,inner sep=0pt] {%
        \node [name=source,anchor=base]{$#1$};
    }%
    \setcounter{target}{0}
}

\newcounter{target}
\newcommand\target[1]{%
    \tikz[remember picture,baseline,inner sep=0pt] {%
        \node [name=target-\thetarget,anchor=base]{$#1$};
    }%
    \stepcounter{target}%
}

\newcommand\drawarrows{
    \tikz[remember picture, overlay, bend left=40, densely dotted,<-] {
        \foreach \i [evaluate=\i as \n using int(\i-1)] in {1,...,\thetarget} {
            \draw (source.north) to (target-\n.north);
        }
    }
}

\begin{proposition}\label{prop:equivariantL2}Consider a semisplit short exact sequence
\begin{center}\begin{tikzpicture}[auto,/tikz/xscale=0.86]
   \node (0l) at (-2.8,0)      {$0$};
    \node (Mo) at (-1,0)  {$C_0(M)$};
    \node (M) at  (1,0)     {$C(\Mbar)$};
    \node (N) at  (3,0)     {$C(N)$};
    \node (0r) at (4.8,0)      {$0$};
    \draw[->] (M) to node [swap] {$i$} (N);
    \draw[->] (Mo) to node [swap] {}    (M);
    \draw[->] (0l) to node [swap] {}      (Mo);
    \draw[->] (N) to node [swap] {}       (0r);
    \draw[->,densely dotted,bend right,thick] (N) to node [swap] {$s$} (M);
  \end{tikzpicture}\end{center}
  equipped with the action of a discrete group $\Gamma.$
Suppose that either the group is amenable \underline{\emph{or}} that there exists a $\Gamma$-invariant measure on the  space  $\Mbar.$
Given any vector bundle $E$ over the finite-dimensional manifold with boundary $\Mbar,$
 there exist equivariant $\L{2}$-spaces over $\Mbar$ and over $N$ that are compatible with the equivariant quotient map  $i$
  and its equivariant semisplitting map $s$ with

  \begin{center}\begin{tikzpicture}[auto]
    \node (M) at  (0,0) [left] {$\L{2}(\Mbar,E)$};
    \node (N) at  (2,0) [right] {$\L{2}(N,i\star (E)).$};
    \draw[->] (M) to node [swap] {$i\star$} (N);
    \draw[->,densely dotted,bend right,thick] (N) to node [swap] {$s$} (M);
  \end{tikzpicture}\end{center}

  The $\L2$-space map $s$ is injective, the $\L2$-space map  $i\star$ is  surjective, and $i\star\circ s=\Id.$ There exists also a natural $\L{2}$-subspace $\L{2}(M,E)\subset \L{2}(\Mbar,E).$ \end{proposition}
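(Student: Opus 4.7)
The plan is to realize $\L{2}(\Mbar,E)$ as the orthogonal direct sum $\L{2}(M,E)\oplus\L{2}(N,i\star E)$, which amounts to equipping $\Mbar$ with a Borel measure having an interior (Riemannian-type) part supported on $M$ and a separate part supported on the boundary $N$. With this identification, the semisplitting $s$ is the inclusion of the second summand and $i\star$ is the orthogonal projection onto it, so the relations $s$ injective, $i\star$ surjective, $i\star\circ s=\Id$, and $\L{2}(M,E)\subset \L{2}(\Mbar,E)$ become immediate. The $C(\Mbar)$-module structure on the direct sum (pointwise multiplication on each summand, using $f\mapsto f|_N$ on the boundary summand) makes $s$ and $i\star$ intertwine the $\L{2}$-level structure with the given extension.

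The substantive content of the proposition is therefore the construction of $\Gamma$-invariant Borel measures: one on $M$ (or on $\Mbar$) and one on $N$. Under the hypothesis of an invariant measure on $\Mbar$, I would slice it into an interior and a boundary part, using for instance an inward-collared tubular neighbourhood of $N$ and taking a normal limit to obtain the boundary component. Under the amenability hypothesis, I would invoke the classical fact that any continuous action of an amenable discrete group on a compact Hausdorff space admits an invariant Radon probability measure, either via a Day--Markov--Kakutani fixed-point argument applied to the weak-$*$ compact convex set of probability measures on $\Mbar$, or by averaging an arbitrary probability measure against a left-invariant mean on $\ell^{\infty}(\Gamma)$; applying this to $\Mbar$ and to $N$ separately gives the required measures.

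With the two invariant measures in hand, the equivariant structure on $E$ induces unitary $\Gamma$-representations on both $\L{2}$-spaces that commute with the respective $C^*$-algebra actions, and the maps $s$ and $i\star$ are then manifestly $\Gamma$-equivariant. The main obstacle is precisely the construction of the invariant measures without an isometric action to appeal to; the expander-based counterexample in \cite{CounterExample} cited just above shows that, absent amenability or some equivalent hypothesis, no such equivariant splitting can exist in general, which is exactly why the dichotomy appears in the hypotheses.
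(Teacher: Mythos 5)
Your construction takes a genuinely different route from the paper's, and it has a gap at its central step. You posit an orthogonal decomposition $\L{2}(\Mbar,E)=\L{2}(M,E)\oplus\L{2}(N,i\star E)$ coming from a measure on $\Mbar$ with a separate boundary-supported part, and you define the $\L{2}$-level $s$ as inclusion of the boundary summand, i.e.\ extension by zero. But the map so defined is not induced by the given semisplitting $s\colon C(N)\to C(\Mbar)$, which is a unital completely positive map whose values are continuous functions that are generally nonzero throughout the interior; compatibility with \emph{that} $s$ is what the proposition asserts and what the homotopy in \cref{pr:c0pairing2.equiv} later uses, where $\L{2}(N)$ must be the ``delocalized'' image $s(\L{2}(N,i\star E))\subset\L{2}(\Mbar,E)$ rather than an orthogonal complement of the interior part. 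Moreover, the measure-theoretic input you need is not available from the hypotheses: amenability yields a $\Gamma$-invariant probability measure on the compact spaces $\Mbar$ and $N$, but it yields nothing on the non-compact interior $M$ with which to form your interior summand, and the invariant measure on $\Mbar$ may well be concentrated on the boundary --- in the paper's applications $\Gamma$ fixes the ideal point, so the point mass there is already invariant. The ``normal limit'' slicing in the given-measure case has the same difficulty for a general Borel measure.

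The paper's proof sidesteps all of this with one idea your proposal is missing: transport the single invariant state $\phi$ on the unital algebra $C(\Mbar)$ to an invariant state $\phi\circ s$ on $C(N)$ by composing with the completely positive semisplitting, build $\L{2}(N,i\star(E))$ by the KGNS construction from $\phi\circ s$, and then invoke Swan's theorem (embedding the sections of $i\star(E)$ into a trivial bundle over the compact manifold $N$) so that the positive linear map $s$ itself becomes the splitting at the level of sections, with $i\star\circ s=\Id$ inherited from the algebra level; no boundary measure and no orthogonal decomposition are needed, and the non-amenable case is handled by substituting the given invariant measure for the state. To salvage your approach you would have to prove separately that an invariant measure with nontrivial, compatibly related interior and boundary parts exists, which is strictly more than the hypotheses provide.
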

\begin{proof}Suppose we are in the amenable case. Since the middle term  $C(\Mbar)$ of the given short exact sequence
  \begin{equation*} 0\to C_0(M)\to C(\Mbar)\to C(N)\to 0\end{equation*}
  is a unital \Cstar-algebra, it has a weak-* compact state space. Amenability then implies the existence of a $\Gamma$-invariant state, $\phi,$ in that state space. In terms of measures rather than states, this implies the existence of a nontrivial invariant measure on the Gelfand spectrum $\Mbar$ of this \Cstar-algebra.
  Letting $E$ be the given bundle on $\Mbar,$ let $\L{2}(\Mbar, E)$ denote the corresponding $\L{2}$ space constructed from the bundle $E$ and the above invariant measure coming from $\phi.$ If we compose the invariant state $\phi$ with the linear, unital,  and
  completely positive map provided by the previously discussed equivariant semisplitting $s\colon C(N)\to \Cv,$ we obtain an invariant state $\phi\circ s$ on $C(N).$ Let $i$ denote the map of Gelfand spaces induced by the quotient map in our exact sequence. Let $i^* (E)$ denote the pullback  of the Dirac bundle on $\Mbar$ to $N.$ Let $\L{2}(N,i\star (E))$ denote the $\L{2}$-space constructed from the bundle $i\star (E)$ and the invariant state $\phi\circ s$ on $C(N).$ (This is a special case of the KGNS construction\cite{kasp1}.)

  The semisplitting map $s\colon C(N)\to \Cv$ is not a homomorphism, but it \textit{is} a linear positive map of ordered Banach spaces. Being completely positive, it induces a linear positive map of the space of sections of trivial bundles over $N.$ By Swan's theorem and the compactness of the manifold $N,$ we can regard the space of sections of $i\star (E)$ as a closed subspace of some trivial bundle over $N,$ and then the map $s$ provides a linear positive isomorphism with a closed subspace of the space of sections of the bundle $E$ over $\Mbar.$   The map $s$ therefore splits the given extension at the level of equivariant $\L{2}$ spaces, because the maps $s$ and $i\star$ are at this level respectively an injective map and a surjective map, and $i\star\circ s=\Id.$
  If we are instead given a $\Gamma$-invariant measure on $\Mbar,$ then we replace states by integrals in the above argument.
\end{proof}
By the above Proposition, the map $s$  embeds $\L{2}(N,i\star (E))$ as an $\L{2}$-subspace in $\L{2}(\Mbar,E),$ and even provides a splitting map at the level of $\L2$-spaces.
We will lighten the notation by writing, for example, $\L{2}(N)\subset \L{2}(\Mbar)$ instead of  $s(\L{2}(N,i\star (E)))\subset \L{2}(\Mbar,E).$
The space $\L{2}(N)$ can be said to be \textit{delocalized,} or \textit{``quantum,''} in the sense that it is not obviously induced by a restriction of $\L{2}(\Mbar)$ to a submanifold of $\Mbar.$


We now turn to applications.
\section{The case of hyperbolic space $\H{n+1}$  }\label{sect:boundary.of.Dirac}\label{sect:hyperbolic.space}
The manifold $M$ will, for the moment, be finite-dimensional hyperbolic space $\H{n+1},$  and the manifold $N$ will be the corresponding  boundary $\S{n}.$ We refer to the boundary points that are in  the bounding hyperplane $\R^{n}$ with respect to the half space model as the real  hyperplane, and we denote the remaining boundary point  of $\H{n+1}$ by $\idealpoint.$\label{def.infinity.point} In classic hyperbolic geometry, these boundary points are known as ideal points, and can also be called limit points. Since the space $C(\Mbar)$ is equivalent to $\Cv,$ in other words, continuous functions on $M$ with a mild smoothness condition imposed at infinity, isometries of  $M$ that preserve this condition  extend to uniform homeomorphisms of  $\Mbar.$ As is well-known --- by Brouwer's theorem --- such a homeomorphism will have a fixed point. In order to insure smoothness, consider the class of homeomorphisms  given by the isometries of $M$ whose extensions are isometries of the real hyperplane into itself, and have a common fixed point on the boundary; geometrically speaking, these are  the elements of the (amenable) real orthogonal group $O(n)$ with their natural action on the half-space model. The fixed point is then exactly the special point defined at the start of this section and  denoted $\idealpoint.$ We  denote by $\Gamma$ some given discrete amenable subgroup of this group: we regard it as acting by isometries on $\S{n}$ and on $\H{n+1}.$ We regard the exact sequence \eqref{ses:Cv} from page \pageref{ses:Cv} as being equipped with an action of the discrete amenable subgroup $\Gamma$ acting by isometries, and moreover we suppose that the isometries preserve orientation so that they will be compatible with Dirac operators. We will then generalize to the case of  warped cones of nowhere positive curvature. It is interesting to note that in the case that such a space has constant curvature,  it is isomorphic to a quotient of hyperbolic or Euclidean space modulo the action of a discrete subgroup of isometries $G$ \thickspace \cite[ch. 8]{doCarmo}. The positive scalar curvature case has obstructions \cite{guo} and will not be considered further.
In \cite[Section B.4]{baum} it is shown that the boundary of Dirac is Dirac, in the case of equivariance with respect to the action of a compact Lie group. (See \cite{BDT,higsonBoundary} for proofs in the non-equivariant case.)  The proof  in \cite{baum} uses the compactness of the Lie group in several places, and thus does not generalize in any obvious way to the case of equivariance under an action by a discrete group.
We aim to prove a similar  result in the case of the action of a discrete amenable group. As already mentioned, we treat first the case of hyperbolic space $M=\H{n+1},$ with $N=\S{n},$ and a discrete amenable subgroup of $SO(n)$ acting. Recall that  hyperbolic space has a natural warped cone structure: namely  $[0,\infty)\times S^{n}$ with metric $dr^2+\sinh^2(r) \tilde{g}.$
In the following theorem, the  cycle $[r]\in\KKg^1 (C(N),C_0 (M))$ has operator given by multiplication by the  radial variable of this warped cone $M.$
The Dirac operators below are as in \cref{rem:Dirac.ops}, and the homomorphism $\phi$ is defined next in \cref{prop:phiForHyperbolicCase}.

\begin{theorem} Let $M$ be hyperbolic space $M=\H{n+1},$ and let $N$ be its boundary. Let $\Gamma$ be a discrete amenable subgroup of $SO(n)$ acting on
  $ 0\to C_0(M)\to \Cv\to C(N)\to 0$
  by oriented isometries. The Dirac operators over $N$ and over $M$ give
  $\KKg$-classes $[D_N]\in \KKg^0(C(N),\C)$ and $[D_M]\in \KKg^1(C_0
    (M),\C),$  and $[D_N]=[r]\kasparovproduct [D_M].$ The cycle $[r]\in\KKg^1 (C(N),C_0 (M))$ is given by $(C_0(M),\phi,r).$
\end{theorem}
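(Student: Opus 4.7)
The strategy is to lift the non-equivariant version of this statement from \cite{shortproof} to the equivariant setting using the two technical ingredients developed in \cref{sect:approx.equivariance,sect:virtual.spaces}: approximate equivariance (\cref{prop:approx.equiv.phi}) and the delocalized $\L{2}$-space embedding (\cref{prop:equivariantL2}). The first step is to verify that $(C_0(M),\phi,r)$ is a genuine cycle in $\Psi_\Gamma^1(C(N),C_0(M))$. The Hilbert module is $C_0(M)$, graded in the standard way for an odd cycle; the operator $r$ is multiplication by the radial coordinate, regular and self-adjoint; and $\phi$ pulls back functions from $N$ via the radial projection. The cycle condition $(i+r)^{-1}\in J_\phi$ holds because $\phi(f)(i+r)^{-1}$ lies in $C_0(M)$, which acts compactly on itself. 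Since each element of $\Gamma\subset SO(n)$ fixes $\idealpoint$ and preserves the radial coordinate, we have $\alpha_g(r)=r$, and $\phi$ is $\Gamma$-equivariant at worst modulo compacts; by \cref{prop:approx.equiv.phi} this suffices.

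Next I would compute $[r]\kasparovproduct_{C_0(M)}[D_M]$ via an unbounded representative. The interior tensor product collapses to $C_0(M)\otimes_{C_0(M)}\L{2}(M,E)\cong \L{2}(M,E)$, with $C(N)$ acting through $\phi$. The natural product operator is the graded connection sum of $r$ and $D_M$, which up to signs may be written $\bigl(\begin{smallmatrix} 0 & r-iD_M \\ r+iD_M & 0 \end{smallmatrix}\bigr)$. The Kucerovsky connection conditions are to be verified without reference to $\Gamma$: the commutator $[D_M,\phi(f)]$ is bounded for smooth $f$ (the radial extension has bounded gradient on the warped cone), the mixed commutator $[r,D_M]$ is essentially bounded Clifford multiplication in the radial direction, and the required positivity of the cross-term then follows. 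By the remark after \cref{def:Gadmissible.approx}, this non-equivariant verification promotes automatically to an equivariant product in $\Psi_\Gamma^0(C(N),\C)$.

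The main work, and the main obstacle, lies in step three: identifying the product cycle with $[D_N]$. I would use the equivariant embedding $s\colon \L{2}(N,i^*E)\hookrightarrow \L{2}(M,E)$ of \cref{prop:equivariantL2} to realize $\L{2}(N)$ as a $\Gamma$-invariant closed subspace of $\L{2}(M,E)$, and then exhibit a $\Gamma$-equivariant operator homotopy (for instance through the radial dilation $r\mapsto tr$) connecting the product cycle to the orthogonal sum of $D_N$ carried on $s(\L{2}(N,i^*E))$ with a degenerate complement. The dilation concentrates spectral mass toward the ideal boundary as $t$ grows, and the limiting boundary operator is identified with $D_N$ via the Bismut-Cheeger restriction formula from \cite{BismutCheeger}. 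The equivariance of the entire homotopy is available precisely because $r$, $\phi$, and $s$ are all $\Gamma$-invariant, or approximately so, under the assumption that $\Gamma$ fixes $\idealpoint$; the non-equivariant content of this localization is what is carried out in \cite{shortproof} and is what is being generalized here.
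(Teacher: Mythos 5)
Your first two steps track the paper's argument closely: the construction of the approximately equivariant $\phi$ by propagating boundary values along the geodesics terminating at $\idealpoint$ (\cref{prop:phiForHyperbolicCase}), the observation that $\alpha_\gamma(r)-r$ is controlled because $\Gamma\subset SO(n)$ fixes $\idealpoint$, and the computation $[r]\kasparovproduct[D_M]=[D_M+ir]$ by verifying the unbounded connection and semiboundedness conditions non-equivariantly and then invoking the remark after \cref{def:Gadmissible.approx} — this is exactly \cref{lem:c0pairing.equiv}. You also correctly identify the two external inputs: \cref{prop:equivariantL2} for the equivariant embedding $s\colon\L{2}(N,i^*E)\hookrightarrow\L{2}(\Mbar,E)$, and the Bismut--Cheeger construction for relating $D_M$, $D_{\Mbar}$, and $D_N$.

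The gap is in your third step, which is where the theorem actually lives. The paper does \emph{not} identify $[D_M+ir]$ with $[D_N]$ by a concentration argument: it builds the dimension-drop Hilbert $C([0,1])$-module $E=\{f\in\L{2}(\Mbar)\tensor C([0,1]): f(0)\in\L{2}(N),\ f(1)\in\L{2}(M)\}$ and uses the operator path $D_M+\lambda ir$ with $\lambda$ running from $1$ to $0$, i.e.\ it \emph{switches off} the potential while sliding the Hilbert space from $\L{2}(M)$ to the delocalized $\L{2}(N)$, where $D_M$ is read as $D_N$ via Bismut--Cheeger (\cref{pr:c0pairing2.equiv}). Your proposed dilation $r\mapsto tr$ with $t\to\infty$ runs in the opposite direction and rests on a heuristic that does not hold as stated: for a Callias-type operator $D_M+itV$ the resolvent and index data localize near the zero set of $V$ as $t\to\infty$, and here $V=r$ vanishes at the cone tip, not at the ideal boundary, so "spectral mass concentrates toward the ideal boundary as $t$ grows" is at best unjustified and at worst backwards. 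Moreover $t=\infty$ is not an endpoint of a Kasparov homotopy, so even granting the localization you would still need to produce an actual cycle over $C([0,1])$ whose evaluation at one end is $D_N$ on $s(\L{2}(N,i^*E))$ plus a degenerate summand; that identification is precisely the content you have deferred, and it is the step the dimension-drop module is designed to supply. To repair the proposal, replace the dilation by the module $E$ above and the path $D_M+\lambda ir$, and check that this path, together with the evaluation map $i^*$ of \cref{prop:equivariantL2} at $\lambda=0$, satisfies the cycle conditions of \cref{def:Gadmissible.approx} over $C([0,1])$.
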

The proof will take several Lemmas, and concludes in \cref{cor:The boundary of Dirac is Dirac}.
 We begin by using the geometry of hyperbolic space to construct the  $\Gamma$-invariant homomorphism $\phi$ that is used in the cycle $[r].$ The exact property that is used is the visibility space property.

\begin{proposition} There exists a $*$-homomorphism $\phi\colon C(\S{n}) \to \Mult (C_0 (\H{n+1})),$
  and $$\alpha_g(\phi (f))-\phi (\alpha_g(f))\in C_0(\H{n+1}).$$   \label{prop:phiForHyperbolicCase}  \end{proposition}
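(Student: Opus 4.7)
\noindent\textit{Proof plan.} My plan is to construct $\phi$ as the pullback $\phi(f):=f\circ\pi$ along a continuous, $\Gamma$-equivariant map $\pi\colon \H{n+1}\to\S{n}$. Once such a $\pi$ is in hand, $\phi$ is automatically a $*$-homomorphism (inheriting the pointwise operations of $C(\S{n})$) landing in $C_b(\H{n+1})=\Mult(C_0(\H{n+1}))$ by continuity and boundedness of $f$; the required estimate on $\alpha_g(\phi(f))-\phi(\alpha_g(f))$ then reduces immediately to a property of $\pi$.

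To build $\pi$ I would invoke the visibility property together with the fact that $\Gamma\subset SO(n)$ fixes the ideal boundary point $\idealpoint$. Working in the half-space model $\H{n+1}=\{(y,t):y\in\R^n,\ t>0\}$, I identify $\S{n}$ with the one-point compactification of $\R^n$, whose added point is exactly $\idealpoint$. Visibility says that through $\idealpoint$ and any interior point $(y,t)$ there is a unique geodesic; in half-space coordinates this geodesic is the vertical line through $(y,t)$, whose opposite endpoint on $\S{n}$ is the point $y\in\R^n$. I therefore set $\pi(y,t):=y$. This map lands in $\R^n\subset\S{n}$ and is manifestly continuous, so $\phi(f)=f\circ\pi$ is a continuous bounded function on $\H{n+1}$.

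For equivariance, the relevant actions are $g\cdot(y,t)=(gy,t)$ on $\H{n+1}$ and $g\cdot y=gy$, $g\cdot\idealpoint=\idealpoint$ on $\S{n}$. Since $\Gamma$ fixes $\idealpoint$, the vertical geodesic through $g\cdot(y,t)$ is the $g$-image of the vertical geodesic through $(y,t)$, so $\pi(g\cdot(y,t))=g\cdot\pi(y,t)$. A one-line calculation then gives $\alpha_g(\phi(f))=\phi(\alpha_g(f))$ identically, so the difference lies in $C_0(\H{n+1})$ trivially.

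The step I expect to require the most thought is not in this hyperbolic case, where everything collapses neatly because $\Gamma$ has a common fixed boundary point and the half-space chart trivializes the geodesic pencil through it. Rather, it is the later generalization to Hadamard manifolds of nowhere positive curvature, where no such common boundary fixed point is available: there one must replace the vertical projection by a geodesic-ray projection from an interior base point $x_0$, which is only continuous off $x_0$ and only equivariant up to asymptotic error. It is in that more general setting that one genuinely needs visibility in full strength, to ensure that $\pi(g^{-1}x)$ and $g^{-1}\pi(x)$ become boundary-close as $x\to\infty$, so that $\alpha_g(\phi(f))-\phi(\alpha_g(f))$ actually lies in $C_0$ rather than merely in $C_b$.
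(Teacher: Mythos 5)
Your construction is the same as the paper's: $\phi(f)$ is obtained by propagating $f$ along the vertical geodesics asymptotic to $\idealpoint$, i.e.\ $\phi(f)=f\circ\pi$ with $\pi(y,t)=y$ in the half-space model, and the multiplier and $*$-homomorphism properties follow exactly as you say. Your closing observation that the given $O(n)$-action fixing $\idealpoint$ makes $\pi$ \emph{exactly} equivariant, so that $\alpha_g(\phi(f))-\phi(\alpha_g(f))$ vanishes identically rather than merely lying in $C_0(\H{n+1})$, is correct and slightly sharper than the paper's argument, which only checks agreement on the boundary away from $\idealpoint$ (the weaker, approximate statement being the one that survives in the general Hadamard case, as you anticipate).
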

\begin{proof}

  In the half-space model for hyperbolic space, the given function $f$ is defined on the boundary points (\emph{i.e.} limit points) of the hyperbolic space. Let $n$ denote a limit point in the real hyperplane. Consider the  geodesic through hyperbolic space from the limit point $n$ to the limit point $\idealpoint$. This limit point is  defined on page \pageref{def.infinity.point}.
  Extend the domain of the given function $f$ to all of $\Mbar\setminus\idealpoint$ by defining it to be constant on all such geodesics. In other words, extend $f$ to a larger domain by making it constant with respect to the $y$ co-ordinate of the half-space model, so that $f(n,y):=f(n).$
  The function we obtain is bounded and continuous at all ordinary points of the hyperbolic space, and thus we can use the function as a multiplier of elements of $C_0(\H{n+1}).$ Let us, for convenience, define $\phi(f)$ to be zero at the limit point $\idealpoint,$ so that we may regard $\phi(f)$ as a bounded function on $\Mbar$ that is continuous except at the limit point $\idealpoint.$ Now we notice that $\alpha_g(\phi (f))$ and $\phi (\alpha_g(f))$ are equal when restricted to points of the boundary, $\S{n}.$ But then $\alpha_g(\phi (f)) - \phi (\alpha_g(f))$ is a function that is continuous at all the points of hyperbolic space and its boundary, except possibly the limit point $\idealpoint,$ and moreover it is zero at all other limit points.  This means that this function is in $C_0(\H{n+1})$ as claimed.
  Finally, we remark that the mapping $\phi\colon C(\S{n}) \to \Mult (C_0 (\H{n+1}))$ that we have defined is evidently an algebraic $*$-homomorphism, and algebraic *-homomorphisms of \Cstar-algebras  are automatically bounded (\emph{i.e.} are continuous *-homomorphisms.)
\end{proof}

\subsection{A Kasparov product}

The next lemma is an version of Lemma 3.1 in \cite{shortproof} for an approximately equivariant cycle.  When defining explicit $KK$-elements, it will be useful to note that the complex Clifford algebra $C_1$ is isomorphic to $\complex\oplus \complex$, with elements of the
form $(e,e)$ having even degree, and elements of the form $(e,-e)$ having odd degree. This is sometimes referred to as the odd grading.   The other common grading is the diagonal / off-diagonal grading (for operators), where operators represented by diagonal $2$-by-$2$ matrices have the even grading, and antidiagonal ones the odd grading. The space of Hilbert module operators is denoted $\adjointables(E)$ and the ideal of operators that are compact in the Hilbert module sense is denoted $\compacts(E).$

\begin{lemma}\ Let $S$ be a Dirac bundle over the nowhere positive curvature warped cone $M.$
  Let $H$ be the  Hilbert space
  $L^2(M,S)\oplus L^2(M,S)$ with diagonal / off diagonal grading.
  Let $D_M$ be the Dirac operator on $S.$
 Let $\phi$ denote the approximately invariant homomorphism of Proposition \ref{prop:phiForHyperbolicCase}, and let $r$ denote multiplication by $r,$ the radial coordinate of the warped cone.
  Then the Kasparov product $[r]\kasparovproduct [D_M]$ equals $[D_M+ir],$ where
  \begin{eqnarray*}
    [D_M]&:=&\left(H,m ,\left(\begin{array}{cc}
        0   & D_M \\
        D_M & 0\end{array}
    \right)\right)              \in KK(C_0(M)\tensor C_1 ,\complex),\\{}
    [r]&:=&\left(C_0(M)\otimes C_1,\phi,\left(\begin{array}{cc}
        r & 0  \\
        0 & -r  \end{array}
    \right)\right)              \in KK(C(N),C_0(M)\otimes C_1),\\{}
  &&\mbox{and}\\{}
  [D_{M}+ir]   &:= &\left(H,\phi,\left(\begin{array}{ccc}
            0      & D_M-ir & \\
            D_M+ir & 0 & \end{array}
    \right)\right)              \in KK(C(N),\complex).\\{}\end{eqnarray*}
  \label{lem:c0pairing.equiv} \end{lemma}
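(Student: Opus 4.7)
The strategy is to invoke Kucerovsky's unbounded criterion for Kasparov products (as in \cite{kucerovsky.kaspcon4}), and to appeal to the Remark preceding the statement, which reduces the equivariant Kasparov product to the verification of the standard non-equivariant connection conditions. The plan therefore splits into (a) confirming that all three cycles are legitimate approximately equivariant Kasparov modules in the sense of \cref{def:Gadmissible.approx}, and (b) checking the three hypotheses of Kucerovsky's criterion: a domain condition, a connection (commutator) condition, and a positivity condition on the anticommutator.

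First I would check the cycle conditions. For $[D_M]$ this is standard (Bismut--Cheeger). For $[r]$, the operator $r$ is essentially self-adjoint on $C_c(M)$; the commutator $[r,\phi(f)]$ vanishes identically since both are multiplication operators; and $(i+r)^{-1}\phi(f)\in C_0(M)$ because $\phi(f)$ is bounded while $(i+r)^{-1}\to 0$ as $r\to\infty$. For $[D_M+ir]$, the off-diagonal matrix is self-adjoint since $(D_M+ir)^*=D_M-ir$; the commutator $[D_M+ir,\phi(f)]=[D_M,\phi(f)]$ is bounded on a dense subalgebra of smooth $f$; and the resolvent times $\phi(f)$ is locally compact because $D_M^2+r^2$ dominates at infinity, confining the operator. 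Approximate equivariance of $\phi$ is supplied by \cref{prop:phiForHyperbolicCase}.

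Then I would verify Kucerovsky's conditions. Setting $R=\begin{pmatrix} r & 0\\ 0 & -r\end{pmatrix}$ and $D=\begin{pmatrix} 0 & D_M-ir\\ D_M+ir & 0\end{pmatrix}$, a direct computation gives
\[
RD+DR \;=\; \begin{pmatrix} 0 & -[D_M,r]\\ [D_M,r] & 0 \end{pmatrix},
\]
because the potentially unbounded $\pm i r^2$ terms cancel exactly in each matrix entry. Since $[D_M,r]$ is Clifford multiplication by the gradient of the radial coordinate, it is bounded (of norm at most one on a unit-length radial vector field), so the positivity condition holds with any constant exceeding $\|[D_M,r]\|$. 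The connection condition reduces to the boundedness of $[D_M,\phi(f)]$ for $f$ in a dense subalgebra, which is standard since $\phi(f)$ is locally smooth. The domain condition follows from the fact that compactly supported smooth sections of $S\oplus S$ form a common core for $R$, for the bounded-diagonal Dirac, and for $D_M+ir$.

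The main obstacle is conceptual rather than computational: one must see why the precise Dirac--Schr\"odinger form $D_M+ir$, paired against the sign-reversed operator $\mathrm{diag}(r,-r)$ defining $[r]$, is exactly what forces the \emph{a priori} unbounded $r^2$ contributions to cancel in the anticommutator $RD+DR$, leaving only the bounded commutator $[D_M,r]$. Once this algebraic cancellation is observed, the remainder is routine from \cite{shortproof} and \cite{kucerovsky.kaspcon4}, transported to the approximately equivariant setting by the Remark following \cref{def:Gadmissible.approx}.
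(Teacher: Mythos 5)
Your proposal follows essentially the same route as the paper: identify $H$ with the inner tensor product of $C_0(M)\otimes C_1$ and $H$ over $m$, invoke Kucerovsky's unbounded-product criterion, and observe that the connection and semiboundedness conditions do not involve the group action, so the non-equivariant argument of Lemma 3.1 of \cite{shortproof} carries over; your explicit cancellation of the $r^2$ terms in the anticommutator is exactly the computation underlying the semiboundedness check there. The one small point the paper makes explicit that you leave implicit is that $[r]$ is approximately equivariant at the level of the \emph{operator} as well, because $r-\alpha_\gamma(r)$ is bounded for each $\gamma\in\Gamma$, not only at the level of the homomorphism $\phi$.
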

  \begin{proof}We follow   the proof of Lemma 3.1 in \cite{shortproof}. The action of $m :C_0(M)\otimes C_1\to \adjointables (H)$ is given by
    \[\begin{array}{rclrcl}
    m :b\oplus b&\mapsto&\left(\begin{array}{cc}
    b&0\\
    0&b\end{array}
    \right),&m :b\oplus -b&\mapsto&\left(\begin{array}{cc}
    0&-ib\\
    ib&0\end{array}
    \right)\end{array}
    ,\]
    where  elements of $C_0(M)$ act on $\L{2}(M,S)$ as multiplication by functions on $M.$
    A calculation shows that there is a
    Hilbert module isomorphism that identifies $H=L^2(M,S)\oplus L^2(M,S)$ with the inner tensor
    product of $C_0(M)\otimes C_1$ and $H$ over $m.$

     In order to apply the criterion for  an unbounded cycle to be the
    Kasparov product of two given cycles \cite{kucerovsky.Ktheory}, we have to verify a semiboundedness condition  and a
    connection condition. Since these conditions do not explicitly involve the group action, the proof of the non-equivariant case (Lemma 3.1 in \cite{shortproof}) goes through, with $[r]$ in the place of $[A]$ there. We comment that $[r]$ is indeed an approximately equivariant cycle, because $r-\alpha_\gamma (r)$ is bounded for each $\gamma\in\Gamma.$
  \end{proof}

\subsection{A homotopy}\label{sect.A.homotopy}
We now make use of these $\L{2}$-spaces to construct a Kasparov homotopy of $K$-homology cycles. This homotopy is an equivariant version of the one used in Lemma 4.1 of \cite{shortproof}.
The statement of the next lemma uses Proposition \ref{prop:phiForHyperbolicCase}, and this means that the lemma is at present
applicable only to the current case of hyperbolic space $M.$ However,
the restriction will be removed once we prove \cref{prop:phiForSymmetricCase}.
The Dirac bundle $S$ in the next Lemma  can be taken to be as in  \cref{rem:Dirac.ops}.
\begin{lemma}\
  Let $S$ be a Dirac bundle over the nowhere positive curvature warped cone $M.$
  Let $D_M$ be the Dirac operator on $S.$
Let $\phi$ be the approximately invariant homomorphism of Proposition \ref{prop:phiForHyperbolicCase}, and let $r$ denote multiplication by $r,$ the radial coordinate of the warped cone. Then $[r]\kasparovproduct [D_M]=[D_N],$ where

  \begin{eqnarray*}
    [D_M]:=\left(L^2(M)\oplus L^2(M),m ,\left(\begin{array}{cc}
        0   & D_M \\
        D_M & 0\end{array}
    \right)\right)              &\in& KK(C_0(M)\tensor C_1 ,\complex),\\{}
    [r]:=\left(C_0(M)\otimes C_1,\phi,\left(\begin{array}{cc}
        r & 0  \\
        0 & -r  \end{array}
    \right)\right)              &\in& KK(C(N),C_0(M)\otimes C_1),\\{}
  &&\mbox{and}\\{}
  [D_{N}]   := \left(L^2(N)\oplus L^2(N),m,\left(\begin{array}{ccc}
            0      & D_N & \\
            D_N & 0 & \end{array}
    \right)\right)             & \in& KK(C(N),\complex).\\{}\end{eqnarray*}
  \label{pr:c0pairing2.equiv} \end{lemma}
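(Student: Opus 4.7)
The plan is to produce a Kasparov homotopy over $C[0,1]$ connecting $[D_M+ir]$ --- identified by \cref{lem:c0pairing.equiv} with $[r]\kasparovproduct[D_M]$ --- to $[D_N]$, modeled on the non-equivariant homotopy of \cite[Lem.~4.1]{shortproof} but carried out in the category of approximately equivariant cycles of \cref{def:Gadmissible.approx}. First I would realize both endpoint cycles on a common ambient Hilbert module, using \cref{prop:equivariantL2} to embed $L^2(N)\oplus L^2(N)$ as a delocalized subspace of $L^2(\Mbar)\oplus L^2(\Mbar)$ via the splitting map $s$. The representation $\phi$ of \cref{prop:phiForHyperbolicCase} extends to act on $L^2(\Mbar)$, and its restriction to $s(L^2(N))$ agrees with the multiplication representation $m$ appearing in $[D_N]$, because $\phi(f)$ was built as the constant extension of $f$ along geodesics toward $\idealpoint$; so on functions pulled back from the boundary $\phi$ acts by multiplication, matching $m$.

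With this common framework established, I would transport the non-equivariant argument of \cite{shortproof} essentially verbatim: build a one-parameter family of off-diagonal self-adjoint operators $F_t$ on $L^2(\Mbar)\oplus L^2(\Mbar)$, connecting the bounded transform of the Dirac-Schr\"odinger operator at $t=0$ to the bounded transform of $D_N$, extended by zero off $s(L^2(N))$, at $t=1$. A natural choice is the rescaling $r\mapsto \lambda(t)r$ combined with a projection onto the asymptotic kernel, which by the Bismut--Cheeger analysis recalled in \cref{rem:Dirac.ops} recovers exactly $D_N$ on the boundary $L^2$-space. The underlying geometric estimates are agnostic to the group action, so at the purely analytic level the homotopy transfers without modification.

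The main obstacle is verifying approximate equivariance uniformly along the entire homotopy. Condition $iii')$ of \cref{def:Gadmissible.approx} is time-independent since $\phi$ is fixed, and is already verified by \cref{prop:phiForHyperbolicCase}. Condition $i)$, the norm-continuity of $g\mapsto(F_t-\alpha_g(F_t))e$, holds at $t=0$ because $r-\alpha_\gamma(r)$ is bounded (as noted in \cref{lem:c0pairing.equiv}) and at $t=1$ because the representation $m$ is strictly equivariant; for intermediate $t$ it follows from the strong continuity of the $\Gamma$-action on the $\Gamma$-invariant $L^2$-space furnished by \cref{prop:equivariantL2}, together with continuity of the homotopy in $t$. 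The compactness requirement $(i+F_t)^{-1}\in J_\phi$ is the deepest check: at $t=0$ it is immediate from \cref{lem:c0pairing.equiv}, at $t=1$ from Fredholmness of $D_N$ on the compact manifold $N$, and along the path it is preserved because the intermediate operators differ from the endpoints by relatively compact perturbations controlled by the scaling parameter.

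Combined with \cref{prop:approx.equiv.phi}, the resulting family is then an element of $\Psi_\Gamma(C(N),C[0,1])$ whose evaluations at $t=0$ and $t=1$ are the two cycles in the claim, which yields $[D_M+ir]=[D_N]$ in $\KKg(C(N),\C)$ and completes the proof.
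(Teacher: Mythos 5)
Your proposal follows essentially the same route as the paper: reduce via \cref{lem:c0pairing.equiv} to showing $[D_M+ir]$ is homotopic to $[D_N]$, then build a dimension-drop Hilbert $C([0,1])$-module out of the equivariant embeddings $L^2(N),L^2(M)\subset L^2(\Mbar)$ from \cref{prop:equivariantL2}, carry the operator family $D_M+\lambda ir$ (your rescaling of $r$) across it, and observe that $\phi$ restricted to the delocalized boundary subspace is just multiplication, recovering $m$ and $D_N$. The paper phrases the homotopy in the unbounded picture and leaves the equivariance and compactness checks implicit, whereas you spell them out, but the underlying argument is the same.
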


\begin{proof} Lemma \ref{lem:c0pairing.equiv} shows that  $[r]\kasparovproduct [D_M]=[D_M + ir],$  so we need only to show that $[D_M + ir]$ is Kasparov homotopic to $[D_N].$
  Thus we need to show a suitable homotopy of $L^2(M)\oplus L^2(M)$ and $L^2(N)\oplus L^2(N).$
  Proposition \ref{prop:equivariantL2} provides equivariant copies of $\L{2}(N)$ and $\L{2}(M)$ embedded in $\L{2}(\Mbar).$ There is a ``dimension drop'' Hilbert $C([0,1])$ module
  $$E:=\{f\in \L{2}(\Mbar)\otimes C([0,1]) \colon
    f(0)\in\L{2}(N)  ,
    f(1)\in \L{2}(M)  \}.$$
    The module consists of the continuous functions from $[0,1]$ into the space $\L{2}(\Mbar)$ which take endpoint values in the subspaces $\L{2}(N)$ at one endpoint and $\L{2}(M)$ at the other endpoint. Since all spaces used are equivariant, the module is equivariant.
    The Kasparov triple    $(E,\phi,D_M +\lambda ir)$ together with the evaluation map (at $\lambda=0$) denoted by $i\star$ in Proposition \ref{prop:equivariantL2}
  provides a homotopy of $(\L{2}(M),\phi,D_M + ir)$ and $(\L{2}(N),\phi,D_N).$ Note that the Dirac operator $D_M$ can be regarded as a Dirac operator on $\Mbar$ by \cref{rem:Dirac.ops}.
  We thus obtain a cycle of the form $\left(L^2(N)\oplus L^2(N),\phi,\left(\begin{array}{ccc}
            0      & D_N & \\
            D_N & 0 & \end{array}
    \right)\right)              \in KK(C(N),\complex),$ but the homomorphism $\phi$ in fact acts on $L^2(N)$ by ordinary multiplication. Thus we obtain the cycle $[ D_N ]=\left(L^2(N)\oplus L^2(N),m,\left(\begin{array}{ccc}
            0      & D_N & \\
            D_N & 0 & \end{array}
    \right)\right)              \in KK(C(N),\complex)$ as was to be shown.
\end{proof}

The above shows that if we regard the operation of taking the Kasparov product with the above cycle $[r]$ as a $K$-homological boundary map, then the boundary of the Dirac operator on $M$ is a Dirac operator on $N.$ This can be attributed in the non-equivariant case to \cite[Sect. 4.5]{BDT}, see also \cite{higsonBoundary,Khomology}, and for the case of a compact Lie group action to \cite[Lemma 3.8]{baum}.

The proof we have given can be viewed as a version of the original non-equivariant proof, which consisted of a Kasparov product calculation followed by noticing that the Busby map associated with their boundary cycle  coincides with the Busby map of the given extension. In order to find a natural candidate for the boundary cycle, and  to state that cycle in a simple way, we introduced a slight generalization of the usual notion of a Fredholm triple.

Technically speaking, we have yet to relate our boundary cycle $[r]$ to the extension
$0\to C_0(M)\to \Cv\to C(N)\to 0$. However, the expected equality at the level of Busby maps  does remain valid in the equivariant case. This is because the formula for  computing the Busby map remains the same with or without equivariance:

\begin{proposition}  The Busby map associated with the cycle $[r]\in \KKg^1 (C(N),C_0 (M))$ under the isomorphism
  from the proof of Proposition \ref{prop:approx.equiv.phi} coincides with the Busby map associated with the equivariant extension $ 0\to C_0(M)\to \Cv\to C(N)\to 0.$
\label{prop:equal.Busby.maps}\end{proposition}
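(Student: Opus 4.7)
The plan is to compute both Busby maps explicitly and check that, after the standard identifications, they are the same homomorphism with values in $M(C_0(M))/C_0(M)$. Since the $*$-homomorphism $\phi\colon C(N)\to M(C_0(M))$ of Proposition \ref{prop:phiForHyperbolicCase} takes values in $\Cv$ and lifts the radial-limit quotient map $\Cv\to C(N)$, the Busby map of the extension is read off directly as $\tau_{\mathrm{ext}}(f)=q(\phi(f))$, where $q\colon M(C_0(M))\to M(C_0(M))/C_0(M)$ is the canonical quotient.

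Next I would apply the formula $(E,\phi,F)\mapsto\pi(F\phi(\cdot)F\star)$ from the proof of Proposition \ref{prop:approx.equiv.phi} to $[r]$ as presented in Lemma \ref{lem:c0pairing.equiv}. Taking $E=C_0(M)\otimes C_1$ with its standard Hilbert-module structure, $\phi$ as above, and $F$ the bounded transform of $\mathrm{diag}(r,-r)$, one has $F=\mathrm{diag}(F_0,-F_0)$ with $F_0=r(1+r^2)^{-1/2}$. The decisive observation is that on $E$ both $F$ and $\phi(f)$ act by multiplication by bounded functions on $M$, and two multiplication operators commute; hence $F\phi(f)F\star=\phi(f)F^2$. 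Since $F^2-1=-\mathrm{diag}((1+r^2)^{-1},(1+r^2)^{-1})$ lies in $\compacts(E)=C_0(M)\otimes C_1$, we conclude $\pi(F\phi(f)F\star)=\pi(\phi(f))$.

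The last step is to translate this equality back from the Clifford-stabilized picture to the extension picture: under the standard Clifford suspension identification of $\KKg(C(N),C_0(M)\otimes C_1)$ with $\KKg^1(C(N),C_0(M))$ and the corresponding identification of Ext-groups, the element $\pi(\phi(f))\in M(C_0(M)\otimes C_1)/(C_0(M)\otimes C_1)$ is matched with $q(\phi(f))\in M(C_0(M))/C_0(M)$, so the two Busby maps literally coincide as $f\mapsto[\phi(f)]$. Equivariance is preserved automatically: although $\phi$ is only approximately equivariant on the multiplier algebra, the ambiguity is absorbed by $C_0(M)$ and so disappears after passing to the quotient, exactly as in the argument of Proposition \ref{prop:approx.equiv.phi}. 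The main obstacle I anticipate is pure bookkeeping through the Clifford-algebra stabilization and the passage between $\LE/\KE$ and $M(C_0(M))/C_0(M)$; the operator-theoretic content itself is a one-liner, driven by the twin facts that $F$ and $\phi(f)$ are both multiplication operators and that $F^2$ differs from $1$ by an element of $C_0(M)$.
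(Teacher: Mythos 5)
Your proposal is correct and follows the same route as the paper: the paper's proof of \cref{prop:equal.Busby.maps} is literally the one line ``same as in the non-equivariant case,'' deferring to the references, and what you have written out --- reading off the extension's Busby map as $q\circ\phi$ from the homomorphic lift $\phi(f)\in\Cv\subset\Mult(C_0(M))$, and computing $\pi(F\phi(f)F\star)=\pi(\phi(f))$ because $F$ and $\phi(f)$ are commuting multiplication operators with $F^2-1\in C_0(M)\tensor C_1$ --- is exactly that computation made explicit. Your closing remark that the approximate-equivariance defect of $\phi$ is absorbed into $C_0(M)$ and vanishes in the corona is also the right way to see why the identity survives in the equivariant setting.
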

\begin{proof} Same as in the non-equivariant case \cite{higsonBoundary,Khomology}. \end{proof}

Concluding this part of the proof, we combine \cref{pr:c0pairing2.equiv,prop:equal.Busby.maps} to obtain:

\begin{corollary}[\bf The boundary of Dirac is Dirac]  \label{cor:The boundary of Dirac is Dirac}
  The equivariant $K$-homology cycle $[D_M]$ is mapped to $[D_N]$ under the $K$-homology map induced by the equivariant extension $ 0\to C_0(M)\to \Cv\to C(N)\to 0.$
\end{corollary}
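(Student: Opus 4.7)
The plan is to assemble the corollary directly from \cref{pr:c0pairing2.equiv} and \cref{prop:equal.Busby.maps}; no new ingredients are needed. First I would recall that for a semisplit equivariant extension $0\to C_0(M)\to \Cv\to C(N)\to 0$, the associated $K$-homology boundary map $\partial\colon \KKg(C_0(M),\C)\to \KKg(C(N),\C)$ is implemented, under the $\Extg$--$\KKg^1$ isomorphism, as right Kasparov product with the class in $\KKg^1(C(N),C_0(M))$ that has the same Busby map as the extension. This identification is the standard Ext-to-KK dictionary, and it is available here in the discrete amenable setting.

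Next, I would invoke \cref{prop:equal.Busby.maps} to identify this extension class with our concrete cycle $[r]$, since the latter is explicitly constructed so that its Busby map (computed via the formula $a\mapsto \pi(F\phi(a)F\star)$) agrees with the Busby map of the given extension. Thus computing $\partial[D_M]$ is reduced to computing the Kasparov product $[r]\kasparovproduct [D_M]$.

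Finally, \cref{pr:c0pairing2.equiv} supplies precisely this Kasparov product, giving $[r]\kasparovproduct[D_M]=[D_N]$. Concatenating the three identifications gives $\partial[D_M]=[r]\kasparovproduct[D_M]=[D_N]$, which is exactly the statement of the corollary.

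The only potentially delicate point is the use of the $\Extg$--$\KKg^1$ dictionary in a setting where our chosen representative $[r]$ employs the only \emph{approximately} equivariant homomorphism $\phi$ of \cref{prop:phiForHyperbolicCase}. This is the situation \cref{prop:approx.equiv.phi} was designed to handle: in the discrete amenable case it guarantees that approximately equivariant cycles define genuine elements of $\KKg$, and that the Busby-map comparison of \cref{prop:equal.Busby.maps} determines the $\KKg^1$-class unambiguously. Since the two substantive steps are already in place, no further obstacle remains; the proof is a formal two-line concatenation.
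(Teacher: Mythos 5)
Your proposal is correct and follows exactly the paper's argument: the corollary is obtained by combining \cref{pr:c0pairing2.equiv} (which gives $[r]\kasparovproduct[D_M]=[D_N]$) with \cref{prop:equal.Busby.maps} (which identifies $[r]$ with the extension class via Busby maps), just as you describe. Your added remark on the role of \cref{prop:approx.equiv.phi} in legitimizing the approximately equivariant representative is consistent with the paper's earlier discussion and does not constitute a departure.
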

The proof of the above result just supposes an amenable discrete group action, acting on finite dimensional spin$^c$ manifolds.  If amenability or finite dimensionality is not assumed, a proof along our lines runs up against one of the expander-based counterexamples to the Baum-Connes conjecture \cite{CounterExample}. These counterexamples are all based on failures of exactness at the level of $K$-theory. Thus, we may wonder if these counterexamples do in fact explicitly manifest at the level of geometric $K$-homology. More precisely:
\vspace{-14pt}\begin{quote}\begin{question} \textit{Is it true that the boundary of Dirac is Dirac, at the level of equivariant $K$-homology,  for actions of finitely generated groups that do not uniformly embed into Hilbert space, as in \cite[Section 7]{CounterExample} ?} \end{question}\end{quote}
\noindent Incidently, the $K$-theory boundary map associated with the cycle $[r]\in\KKg^1 (C(N),C_0 (M))$ can be written \KKg-theoretically as shown below, where $C_1$ acts on itself in the natural way. See \cite[Theorem 3.4]{mesland2020} for an ingenious alternative formulation in the bounded picture, using a Poisson-type transform in the specific case of hyperbolic space.
\begin{proposition} Let $[r]\in\KKg(C(N)\tensor C_1, C_0 (M))$ denote
  $$\left(C_0 (M)\tensor C_1,\phi\tensor\Id\colon C(N)\tensor C_1\to\adjointables,\left(\begin{array}{cc}
        r & 0  \\
        0 & -r\end{array}
    \right)\right).$$  This cycle will
  map the $K$-theory element
  \begin{flalign*} [A]&:=\left(M_n(C (N))\tensor C_1, 1 , \left(\begin{array}{cc}
        A & 0  \\
        0 & -A \end{array}
    \right)\right)   \in  \KKg(\complex, C(N)\tensor C_1) \\
\intertext{to  the $K$-theory element}
 [A+r] &:=\left(M_n (C_0 (M))\tensor C_1,1,\left(\begin{array}{cc}
        \phi(A)+r &  0     \\
        0    &  -\phi(A)-r\\\end{array}
    \right)\right)\\&\shoveright{\in \KKg(\complex, C_0 (M)).}
  \end{flalign*}
  \label{prop:pairing.of.r.and.a.function}\label{prop:map.on.K0}\end{proposition}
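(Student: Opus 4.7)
The plan is to compute the interior Kasparov product $[A]\kasparovproduct [r]$ directly in the unbounded picture and to identify it with $[A+r]$. By the unbounded product criterion of \cite{kucerovsky.Ktheory}, combined with the remark at the close of \cref{sect:approx.equivariance} to the effect that the non-equivariant connection conditions suffice in the (approximately) equivariant case, it will be enough to (a) identify the tensor-product module, (b) produce a candidate operator satisfying the connection and positivity criteria, and (c) observe that the resulting cycle is precisely $[A+r]$.

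For step (a), I would first note that $\phi\tensor\Id\colon C(N)\tensor C_1\to\adjointables(C_0(M)\tensor C_1)$ extends entrywise to $n\times n$ matrices, so that the interior tensor product
\[ M_n(C(N))\tensor C_1\ \tensor_{\phi\tensor\Id}\ (C_0(M)\tensor C_1) \]
is canonically isomorphic to $M_n(C_0(M))\tensor C_1$ as a Hilbert $C_0(M)$-module, and the left action of $\C$ pushes through as the identity on the scalar factor. For step (b), the natural candidate product operator is $\phi(A)\tensor 1 + 1\tensor r$ on this composite module. Passing through the odd grading $C_1\isom\C\oplus\C$, the odd generator $e$ corresponds to $(1,-1)$; the diagonal blocks $\mathrm{diag}(A,-A)$ and $\mathrm{diag}(r,-r)$ are then both of the form $(\,\cdot\,)\tensor e$, and after fusing the two $C_1$-factors over the balanced tensor product, the sum $\phi(A)\tensor 1 + 1\tensor r$ becomes $(\phi(A)+r)\tensor e$, which in block form reads exactly $\mathrm{diag}(\phi(A)+r,\,-\phi(A)-r)$.

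For step (c), I would verify the Kasparov connection and semiboundedness criteria. The connection condition will be immediate because the candidate operator is literally a sum of the two constituent operators. The semiboundedness condition reduces to showing that the graded commutator of $\phi(A)\tensor 1$ and $1\tensor r$ is bounded below, and this commutator in fact vanishes identically: on $M_n(C_0(M))\tensor C_1$, both operate by multiplication by commuting multiplier-valued functions of the underlying point of $M$. The approximate equivariance of the resulting cycle will be inherited from $[A]$ (equivariant as a $K$-theory cycle) and $[r]$ (approximately equivariant by \cref{prop:phiForHyperbolicCase}, exactly as in the proof of \cref{lem:c0pairing.equiv}).

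The main obstacle, such as it is, is the bookkeeping of grading conventions --- in particular, tracking how the odd grading $C_1\isom\C\oplus\C$ converts the graded sum $\phi(A)\tensor 1+1\tensor r$ into the explicit block matrix $\mathrm{diag}(\phi(A)+r,-\phi(A)-r)$. The remainder of the argument is formal, since the finite-rank matricial nature of $[A]$ makes all the relevant commutators either bounded or zero.
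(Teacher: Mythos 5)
Your proposal is correct and follows essentially the same route as the paper: identify the interior tensor product $(M_n(C(N))\tensor C_1)\tensor_{\phi\tensor\Id}(C_0(M)\tensor C_1)$ with $M_n(C_0(M))\tensor C_1$, take the natural sum operator as the candidate product, and verify the connection and positivity criteria of the unbounded product theorem (the paper likewise leaves this last step as a routine verification, noting only that the elementary maps $T_x$ are given by multiplication by $\phi(x)$). Your additional bookkeeping of the odd grading and the observation that the relevant graded commutator vanishes are just the details the paper declares routine.
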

\begin{proof}   The (inner) tensor product of Hilbert modules
  $(M_n(C (N))\tensor C_1) \tensor_{\phi\tensor\Id}  C_0(M)\tensor C_1,$ where $\Id\colon C_1\to\adjointables(C_1)$ denotes the natural action of the complex Clifford algebra $C_1$ on itself, is isomorphic to  $M_n(C_0(M)) \tensor C_1$ regarded as a bimodule
  over $C_0 (M)\tensor C_1$ and over $C(N),$ where $C(N)$ acts on $C_0 (M)$ through the map $\phi.$ With respect to this isomorphism, the
  elementary map $T_x \colon C (M)\tensor C_1 \to M_n(C (N))\tensor C_1 \tensor_{\phi\tensor\Id}  C_0 (M)\tensor C_1$ is $\phi(x),$ regarded
  as a multiplication by a matrix-valued function,  $x\in M_n(C(N));$ \textit{i.e.}, a  map $T_x\colon C_0 (M)\tensor C_1 \to   M_n( C_0 (M))\tensor C_1.$ The rest of the proof is a routine verification of the
  connection conditions.
\end{proof}

The case of a degree shift amounts to a  factor of $C_1 :$
\begin{proposition}Let $[r]\in\KKg(C(N), C_0 (M)\otimes C_1)$ denote
  $$\left(C_0 (M)\tensor C_1,\phi\colon C(N)\to\adjointables,\left(\begin{array}{cc}
        r & 0  \\
        0 & -r\end{array}
    \right)\right).$$  This cycle will
  map the $K$-theory element
  \begin{flalign*} [A]&:=\left(M_n(C (N))\tensor C_1, 1 , \left(\begin{array}{cc}
        A & 0  \\
        0 & -A \end{array}
    \right)\right)   \in  \KKg(\complex, C(N)) \\
\intertext{to  the $K$-theory element}
 [A+r] &:=\left(M_n (C_0 (M))\tensor C_1,1,\left(\begin{array}{cc}
        \phi(A)+r &  0     \\
        0    &  -\phi(A)-r\\\end{array}
    \right)\right)\\&\shoveright{\in \KKg(\complex, C_0 (M)\tensor C_1).}
  \end{flalign*}   \label{prop:map.on.K1} \end{proposition}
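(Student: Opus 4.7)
The plan is to mirror the proof of Proposition \ref{prop:map.on.K0}; the only structural change is that the Clifford factor $C_1$ is now attached to the target algebra $C_0(M)$ of the middle cycle rather than to the source algebra $C(N)$. Since $C_1$ records only a $\Z[2]$-grading through the inner tensor product, this relocation changes the bookkeeping but not the substance of the argument.

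Concretely, I first form the inner tensor product of $M_n(C(N))\tensor C_1$, regarded as a graded Hilbert $C(N)$-module, with $C_0(M)\tensor C_1$, regarded as a Hilbert $C_0(M)\tensor C_1$-module, over the map $\phi\colon C(N)\to \adjointables(C_0(M)\tensor C_1)$ of Proposition \ref{prop:phiForHyperbolicCase}, acting on the first factor of the target. Because $M_n(C(N))\tensor C_1$ is a finitely generated free graded Hilbert $C(N)$-module, the tensor product is canonically isomorphic to $M_n(C_0(M))\tensor C_1$ as a graded Hilbert $C_0(M)\tensor C_1$-module, with the $C_1$-grading preserved. Under this isomorphism, the operator representing $[A]$ transports diagonally to $\phi(A)$ (with the sign flip dictated by the grading), while the connection lift of $r$ adds the diagonal multiplier with entries $r$ and $-r$. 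Summing produces exactly the operator of $[A+r]$.

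It then remains to verify the semiboundedness and connection conditions of \cite{kucerovsky.Ktheory} for the unbounded Kasparov product. Since the operator of $[A]$ is bounded, both conditions reduce to a direct check identical to the one in Proposition \ref{prop:map.on.K0}, and the approximate equivariance of $\phi$ is absorbed into the argument via Theorem \ref{prop:approx.equiv.phi}, exactly as in Lemma \ref{lem:c0pairing.equiv}. The main point to watch is that moving $C_1$ from source to target does not break the compatibility between the left action and the grading, but this amounts to the same $C_1$-on-$C_1$ bookkeeping as in the previous proposition, so no new analytic difficulty arises.
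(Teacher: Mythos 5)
Your proposal is correct and follows exactly the route the paper intends: the paper offers no separate proof of this proposition, remarking only that the degree shift ``amounts to a factor of $C_1$,'' i.e.\ that the argument of \cref{prop:map.on.K0} carries over with the Clifford factor relocated from the source of $[r]$ to its target, which is precisely what you do. The one bookkeeping point worth making explicit --- which the paper also suppresses --- is that the inner tensor product now yields $M_n(C_0(M))\hat{\tensor}\,C_1\hat{\tensor}\,C_1$ rather than $M_n(C_0(M))\tensor C_1$ on the nose, and one must invoke the standard identification $C_1\hat{\tensor}\,C_1\cong \Clifford_2$ (formal Bott periodicity) before running the connection-condition check, so your phrase ``canonically isomorphic'' is doing slightly more work than in the unshifted case.
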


  \section{The case of Hadamard manifolds }\label{sect:Hadamard}
We started with the case of hyperbolic space, regarded as a warped cone of constant curvature,  with an action of $SO(n).$
In replacing hyperbolic space by a more general warped cone $M$ of nowhere positive curvature, only one new consideration arises, and this has to do with
defining the approximately equivariant $*$-homomorphism $\phi\colon C(N) \to \Mult (C_0 (M)).$ The previous proofs  go through if we replace
Proposition \ref{prop:phiForHyperbolicCase} above by the following more general Proposition \ref{prop:phiForSymmetricCase}, below.
We may now assume that $\Gamma$ is amenable, acts by oriented isometries on $\Mbar$ and fixes a point in the collar manifold  $N.$

\begin{proposition} Let $M$ denote a simply connected warped cone of nowhere positive curvature, with boundary $N.$  Assume that $\Gamma$ is amenable, acts by oriented isometries on $\Mbar,$ and fixes a point in the  boundary, $N.$
  There exists a $*$-homomorphism $\phi\colon C(N) \to \Mult (C_0 (M)),$ which is approximately equivariant in the sense
  that $\alpha_g(\phi (f))-\phi (\alpha_g(f))\in C_0(M).$   \label{prop:phiForSymmetricCase}  \end{proposition}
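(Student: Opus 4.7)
The plan is to generalize the half-space construction of \cref{prop:phiForHyperbolicCase} by foliating $M$ with biinfinite geodesics asymptotic to $\idealpoint$, and defining $\phi(f)(p)$ to be the value of $f$ at the \textit{other} ideal endpoint of the leaf through $p$. Cartan-Hadamard geometry replaces the explicit half-space coordinates.

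First, apply the Cartan-Hadamard theorem: simple connectivity and nowhere positive curvature imply that the exponential map at any base point is a diffeomorphism $\R^{n+1}\to M$ and that geodesics are unique. Consider the Busemann function $b_{\idealpoint}\colon M\to\R$ associated with the ideal point $\idealpoint$; its gradient field $-\nabla b_{\idealpoint}$ is $\Gamma$-invariant, since $\Gamma$ fixes $\idealpoint$ and acts by isometries, and its integral curves are precisely the geodesic rays asymptotic to $\idealpoint$. For each $p\in M$, extend the ray through $p$ backward to a biinfinite geodesic $\gamma_p\colon\R\to M$. In a complete CAT(0) manifold this backward ray has a unique ideal endpoint $\pi(p)$ in the visual boundary $N$, distinct from $\idealpoint$ by uniqueness of geodesics, so we obtain a map $\pi\colon M\to N\setminus\{\idealpoint\}$.

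Next, define $\phi(f)(p) := f(\pi(p))$. Since the Busemann gradient is continuous on a Hadamard manifold, its integral curves depend continuously on initial data; combined with the warped-cone collar structure near infinity — which forces a geodesic entering the region $(1,\infty)\times N$ at angular coordinate $n$ to have backward limit $n$ — this gives that $\pi$ is continuous on $M$ and extends to the identity on $N\setminus\{\idealpoint\}$. Hence $\phi(f)$ is bounded continuous on $M$, thus a multiplier of $C_0(M)$; the $*$-homomorphism property follows because $\phi$ is a pullback along $\pi$. As in the hyperbolic case, extend $\phi(f)$ to $\Mbar$ by $\phi(f)|_{N\setminus\{\idealpoint\}}=f$ and $\phi(f)(\idealpoint):=0$.

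For approximate equivariance, $\Gamma$-invariance of the Busemann gradient implies $\Gamma$ permutes the leaves of the foliation, and by the uniqueness clause in Cartan-Hadamard we get $\gamma_{\alpha_g p}=\alpha_g\circ\gamma_p$, hence $\pi\circ\alpha_g=\alpha_g\circ\pi$ on $M$. The computation
\[\alpha_g(\phi(f))(p)=f(\pi(g^{-1}p))=f(g^{-1}\pi(p))=(\alpha_g f)(\pi(p))=\phi(\alpha_g f)(p)\]
gives exact equivariance on $M$, so \emph{a fortiori} the required condition $\alpha_g(\phi(f))-\phi(\alpha_gf)\in C_0(M)$.

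The main obstacle is the continuity of $\pi$ and the well-definedness of its backward endpoint: in a general Hadamard manifold, visibility between $\idealpoint$ and arbitrary interior points is not automatic, and the backward ray of $\gamma_p$ could in principle limit to $\idealpoint$ itself or depend discontinuously on $p$. The warped-cone hypothesis, with collar of the form $(1,\infty)\times N$ and nondecreasing warping factor, provides exactly the asymptotic control that rules out these pathologies; making this rigorous is the one substantive step beyond the hyperbolic argument.
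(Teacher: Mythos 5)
Your construction is essentially the paper's own proof: the paper likewise extends $f$ to be constant along the family of geodesics running from $\idealpoint$ to the points of $N$ (your Busemann-gradient foliation is just a coordinate-free description of that same family) and invokes Cartan--Hadamard for their existence and uniqueness. The visibility/continuity issue you flag at the end is left equally implicit in the paper, which simply asserts that distinct such geodesics meet only at $\idealpoint$ and that the extended function is continuous at all regular points; your further observation that equivariance is in fact exact on $M$ (since $\Gamma$ fixes $\idealpoint$ and therefore permutes the leaves) is consistent with, and slightly sharper than, the stated approximate-equivariance conclusion.
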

\begin{proof}
  By the Cartan-Hadamard theorem \cite[Thm. 3.1]{doCarmo}, our  space of nonpositive curvature is equipped with a
  globally defined exponential map, providing well-behaved geodesics radiating outwards from any chosen point. The geodesics extend uniquely to the closure $\Mbar.$

    \begin{floatingfigure}[r]{0.36\textwidth}{ 
      \begin{adjustbox}{width=0.36\textwidth}{  
          \CurvedTrianglePic } \end{adjustbox}  
      \captionof{figure}{Two geodesics of $\Mbar.$} \label{fig:geodesics} }\end{floatingfigure} 

  Consider, thus, the family of geodesics, in $M,$ to  points $n_i$ on the boundary $N,$  emanating from  the limit point $\idealpoint.$ Two such geodesics with distinct values of $n_i$ intersect only at $\idealpoint.$
      Let us now extend the domain of the given function $f\in C(N)$ to all of $\Mbar\setminus\idealpoint$ by defining it to be constant on all such geodesics. In terms of Figure \ref{fig:geodesics}, we propagate the values of $f$ upwards along geodesics, obtaining a function that is well-behaved except possibly at the  point \idealpoint.
  The function we so obtain is bounded and continuous at all regular points of the space $\Mbar,$ and so we can use the function as a multiplier of elements of $C_0(M).$ The remainder of the proof is just as in Proposition \ref{prop:phiForHyperbolicCase}.
\end{proof}

Exactly as in the previous section, we deduce:

\begin{corollary}[\bf The boundary of Dirac is Dirac]  \label{cor:The boundary of Dirac is Dirac.II}
Let
  \begin{eqnarray*}
    [D_M]&:=&\left(L^2(M)\oplus L^2(M),m ,\left(\begin{array}{cc}
        0   & D_M \\
        D_M & 0\end{array}
    \right)\right)              \in KK(C_0(M)\tensor C_1 ,\complex),\\{}
   \\{}
  &&\mbox{and}\\{}
  [D_{N}]   &:= &\left(L^2(N)\oplus L^2(N),m,\left(\begin{array}{ccc}
            0      & D_N & \\
            D_N & 0 & \end{array}
    \right)\right)              \in KK(C(N),\complex),\\{}\end{eqnarray*}
    where $m$ is the natural action by multiplication.
  The equivariant $K$-homology cycle $[D_M]$ is mapped to $[D_N]$ under the $K$-homology map induced by the equivariant extension $ 0\to C_0(M)\to \Cv\to C(N)\to 0.$
\end{corollary}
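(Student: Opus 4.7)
The plan is to repeat verbatim the proof of \cref{cor:The boundary of Dirac is Dirac} given in the hyperbolic case, with the single ingredient change that \cref{prop:phiForHyperbolicCase} is replaced by \cref{prop:phiForSymmetricCase}. Indeed, both \cref{lem:c0pairing.equiv} and \cref{pr:c0pairing2.equiv} are already stated in the generality of warped cones of nowhere positive curvature, and their proofs depend on the ambient manifold only through the existence of an approximately equivariant $*$-homomorphism $\phi\colon C(N)\to \Mult(C_0(M))$ satisfying $\alpha_g(\phi(f))-\phi(\alpha_g(f))\in C_0(M)$; since \cref{prop:phiForSymmetricCase} produces precisely such a $\phi$ in the Hadamard setting, the triple $(C_0(M)\tensor C_1,\phi,r)$ with $r$ the radial coordinate defines an approximately equivariant cycle $[r]\in \KKg^1(C(N),C_0(M))$.

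First I would apply \cref{lem:c0pairing.equiv} to conclude that the Kasparov product $[r]\kasparovproduct [D_M]$ equals $[D_M+ir]$; for this it suffices to check that $r-\alpha_\gamma(r)$ is bounded for each $\gamma\in\Gamma$, which follows from the hypothesis that $\Gamma$ fixes an ideal point and acts by isometries. Next I would invoke \cref{pr:c0pairing2.equiv}, whose proof provides a Kasparov homotopy between $[D_M+ir]$ and $[D_N]$ using the dimension-drop Hilbert module inside $\L{2}(\Mbar)\tensor C([0,1])$; the necessary equivariant $\L{2}$-spaces $\L{2}(N)\subset \L{2}(\Mbar)\supset \L{2}(M)$ are supplied by \cref{prop:equivariantL2}, which applies because $\Gamma$ is amenable.

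To close the loop, I would then invoke \cref{prop:equal.Busby.maps} to identify the Busby map of $[r]$ with the Busby map of the extension $0\to C_0(M)\to \Cv\to C(N)\to 0$; this shows that Kasparov product with $[r]$ computes the $K$-homology connecting map of this extension. Concatenating the three steps yields $\partial[D_M]=[D_N]$ in $\KKg^0(C(N),\complex)$, which is the claim. The only genuinely new ingredient beyond the hyperbolic case is the construction of $\phi$, and that hard work is already packaged into \cref{prop:phiForSymmetricCase} via the Cartan-Hadamard theorem, which lets one propagate the boundary values of $f$ inward along the geodesics issuing from the fixed ideal point; the rest of the argument is a formal assembly of results already in hand, so I do not anticipate any further obstacle.
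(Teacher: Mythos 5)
Your proposal matches the paper's own argument: the authors explicitly state that the only new ingredient in the Hadamard case is the construction of $\phi$ via \cref{prop:phiForSymmetricCase}, after which \cref{lem:c0pairing.equiv}, \cref{pr:c0pairing2.equiv}, and \cref{prop:equal.Busby.maps} are reassembled exactly as in the hyperbolic case. Your assembly of these ingredients, including the observation that $r-\alpha_\gamma(r)$ is bounded and that \cref{prop:equivariantL2} supplies the equivariant $\L{2}$-spaces, is the same route the paper takes.
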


\subsection{Application to an Anghel-type theorem}
We now turn to the topic of an Anghel-type theorem, on a warped cone of nowhere positive curvature with an amenable discrete group acting quasi-parabolically by oriented isometries.
We first recall a classic lemma from the Hilbert space setting. Here, the potential $V$ is a self-adjoint unbounded multiplier acting on
(the sections of) the vector bundle $E.$
\begin{lemma} Let $D_M$ be a Dirac-type operator on a spin bundle $E$ over the warped cone $M.$
  Let $V\colon \Gamma(E)\to \Gamma(E)$  go to infinity at infinity in the warped cone. Then $D_M \pm iV,$ regarded as an
  unbounded operator on the Hilbert space $\L{2}(M,E),$ has compact resolvent.
  \label{lem:resolvent.compact.non-equivariantly}\end{lemma}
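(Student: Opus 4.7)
The plan is to reduce the problem to the compact resolvent of a Schr\"odinger-type operator. I would start from the Bochner-type identity, valid on smooth compactly supported sections,
\[ (D_M \pm iV)^*(D_M \pm iV) \;=\; D_M^2 \,+\, V^2 \,\pm\, i[D_M, V]. \]
Under the standing Anghel hypothesis that $[D_M, V]$ is bounded, the term $\pm i[D_M,V]$ is a bounded self-adjoint perturbation, and compactness of the resolvent is preserved by bounded self-adjoint perturbations; so it is enough to show that the Friedrichs extension of $D_M^2 + V^2$ has compact resolvent.

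For that step I would show that the form domain $\{\psi : D_M\psi,\, V\psi \in L^2\}$, equipped with the graph norm of $(1 + D_M^2 + V^2)^{1/2}$, embeds compactly into $L^2(M,E)$. Given a sequence $\{\psi_n\}$ bounded in this norm, the bound on $\|V\psi_n\|$ combined with the hypothesis that $V \to \infty$ near infinity yields, via a Chebyshev estimate, uniform tightness: for every $\varepsilon > 0$ there is a compact $K_\varepsilon \subset M$ with $\int_{M \setminus K_\varepsilon}|\psi_n|^2 < \varepsilon$ for all $n$. On the interior of $K_\varepsilon$ the uniform $L^2$-bound on $D_M\psi_n$ together with local elliptic regularity of the Dirac-type operator controls $\psi_n$ in $H^1_{\mathrm{loc}}$, and Rellich--Kondrachov then extracts an $L^2(K_\varepsilon)$-convergent subsequence. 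A diagonal extraction produces a subsequence Cauchy in $L^2(M,E)$.

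To transfer compact resolvent from $T^*T$ to $T := D_M \pm iV$ itself, I would use the identity $\|\psi\|^2 + \|T\psi\|^2 = \langle \psi, (1 + T^*T)\psi\rangle$, which tells us that the graph norm of $T$ coincides with the form norm of $T^*T$; hence $\mathrm{dom}(T)$ in its graph norm embeds compactly in $L^2(M,E)$. For any $\mu$ in the resolvent set of $T$ the resolvent $(T - \mu)^{-1}$ factors as a continuous map into $\mathrm{dom}(T)$ followed by this compact embedding, and is therefore compact.

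The main obstacle I anticipate is the preparatory step of checking that $D_M \pm iV$ is genuinely closed with nonempty resolvent set, so that the formal Bochner identity above is realized by bona fide self-adjoint operators $T^*T$, and that the quadratic form $q(\psi)=\|D_M\psi\|^2+\|V\psi\|^2$ is closable with the expected domain. On a complete Riemannian manifold with $V \to \infty$ at infinity and $[D_M, V]$ bounded, essential self-adjointness of $D_M$ and of the associated Dirac--Schr\"odinger operators is standard, but it must be invoked carefully to justify the manipulations above.
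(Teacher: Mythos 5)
The paper gives no proof of this lemma, deferring to the cited references (\cite{shortproof,anghel,anghel.fredholm,anghel.fredholm2}), and your argument is essentially the standard one found there: pass to $(D_M\pm iV)^*(D_M\pm iV)=D_M^2+V^2\pm i[D_M,V]$, absorb the commutator, and get compactness of the embedding of the form domain of $D_M^2+V^2$ into $L^2(M,E)$ from tightness (via $V\to\infty$) plus local elliptic regularity and Rellich. The only point worth flagging is that the lemma as stated omits the hypothesis that $[D_M,V]$ is bounded (or at least dominated by $D_M^2+V^2$), which your proof genuinely needs and which you correctly import as a standing Anghel-type assumption; with that understood, the proof is correct.
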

For a proof of the above lemma, see any of \cite{shortproof,anghel,anghel.fredholm,anghel.fredholm2}.
The above lemma was a key point in showing \cite{shortproof}  that $D_M+iV$ defines a (non-equivariant)
cycle in $KK(\C,\C).$ As shown using the Banach--Steinhaus theorem in Appendix A,\label{eq:Banach.Steinhaus}  if $D_m +iV$ does define an equivariant cycle   as in \cref{def:Gadmissible.approx}, then it follows that $\alpha_\gamma (D_M - iV) - (D_M -iV)$ is
bounded for each $\gamma\in\Gamma.$ 
So the potential $V$ must satisfy the strong but necessary assumption that for each $\gamma$, the difference
$\alpha_\gamma (V) - V$ is bounded.  But, in fact
a physically similar condition already appears in the classic Anghel's theorem, see page \pageref{th:original.Anghel's}, where it was assumed that the gradient of the potential, $[D,V],$ was bounded. Thus the assumption made is plausible.
Lemma \ref{lem:resolvent.compact.non-equivariantly} together with the above discussion implies that the necessary conditions for an unbounded equivariant cycle (\cref{def:Gadmissible})  hold:
\begin{lemma} Let $D_M$ be a Dirac-type operator on a spin bundle $E$ over the warped cone $M.$
  Let $V\colon \Gamma(E)\to \Gamma(E)$ go to infinity at infinity in the warped cone, and let $\alpha_\gamma (V) - V$ be
  bounded for each group element $\gamma\in\Gamma.$ Then the cycle
  $$\left(\L{2}(M,E)\oplus \L{2}(M,E),1,
    \left(\begin{array}{ccc}
        0      & D_M-iV & \\
        D_M+iV & 0\end{array}
    \right)\right)$$ is an equivariant $\KKg (\C,\C)$ cycle. \label{lem:resolvent.compact.equivariantly}\end{lemma}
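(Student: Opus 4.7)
The plan is to verify the four conditions of \cref{def:Gadmissible.approx} one by one, with $A = B = \complex$, the grading supplied by the block decomposition of the Hilbert space, and $\phi$ the unital inclusion of $\complex$ acting by scalar multiplication. Under this choice the $\Gamma$-action on $\complex$ is trivial and $\phi(c)$ commutes with every operator, so conditions $iii'$ and $iv$ are automatic: $\alpha_g(\phi(c))-\phi(\alpha_g(c)) = 0$ and $[D,\phi(c)] = 0$ for every $c \in \complex$.

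Set $D := \left(\begin{array}{cc} 0 & D_M-iV \\ D_M+iV & 0 \end{array}\right)$. For condition $ii$, first note that $D$ is self-adjoint on its natural domain, as follows from self-adjointness of $D_M$ and $V$ together with $(D_M\pm iV)\star = D_M \mp iV$. Compactness of $(i+D)^{-1}$ is precisely the analytical input used in the non-equivariant cycle construction of \cite{shortproof}: \cref{lem:resolvent.compact.non-equivariantly} gives compact resolvent for $D_M \pm iV$, and a standard argument on block off-diagonal operators transfers this to the self-adjoint operator $D$, essentially via the diagonal form $D^2 = (D_M-iV)(D_M+iV) \oplus (D_M+iV)(D_M-iV)$ together with functional calculus on the positive self-adjoint operator $1+D^2$. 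Since $A = \complex$, every compact operator lies in $J_\phi$, so condition $ii$ holds.

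For condition $i$, I use that $\Gamma$ acts on $M$ by oriented isometries compatible with the spin structure; consequently the Dirac operator is strictly equivariant, $\alpha_g(D_M) = D_M$. The off-diagonal entries of $D - \alpha_g(D)$ thus reduce to $\pm i(V - \alpha_g(V))$, which is a bounded operator on $\L{2}(M,E)$ by the hypothesis of the lemma. Since $\Gamma$ is discrete, the orbit map $g \mapsto (D - \alpha_g(D))e$ is then automatically continuous for each vector $e$.

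The only substantive step is condition $ii$, whose verification boils down entirely to the analytical content of \cref{lem:resolvent.compact.non-equivariantly}; every other condition is either formal, because $A = \complex$, or follows immediately from the hypothesis that $\alpha_\gamma(V)-V$ is bounded together with the $\Gamma$-equivariance of $D_M$ and discreteness of $\Gamma$.
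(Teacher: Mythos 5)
Your proposal is correct and follows essentially the same route as the paper, which obtains the lemma by combining \cref{lem:resolvent.compact.non-equivariantly} (compact resolvent of $D_M\pm iV$, hence condition $ii$) with the preceding discussion that boundedness of $\alpha_\gamma(V)-V$ is exactly what makes condition $i$ of \cref{def:Gadmissible.approx} meaningful, the remaining conditions being vacuous for $A=\complex$. Your write-up merely makes explicit the routine verifications (self-adjointness of the block operator, equivariance of $D_M$ under oriented isometries, automatic continuity over a discrete group) that the paper leaves implicit.
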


Now, let us recall that:
\begin{enumerate}\item\ The Dirac operators over $N$ and over $M$ give
  $\KKg$-classes $[D_N]\in \KKg^0(C(N),\complex)$ and $[D_M]\in \KKg^1(C_0
    (M),\complex),$
  \item\ The geometrical properties of $M$ give an approximately equivariant $\KKg$-class
  $[r]\in \KKg^1(C(N),C_0(M)),$ as defined in \cref{lem:c0pairing.equiv},
  \item\ A given endomorphism $f$ that defines a $K$-theory class $[f]\in \KKg^0(\complex,C(N)),$ and, by Proposition \ref{prop:pairing.of.r.and.a.function} / \ref{prop:map.on.K1}, a $\KKg$ class  $[V]\in\KKg^1(\complex
    ,C_0(M)),$ where $V=f+ir,$ and
  \item\ The index of $D_M +iV$ defines a $\KKg$-class $[D_M +iV]\in \KKg(\complex
    ,\complex),$  see Lemma \ref{lem:resolvent.compact.equivariantly}.
\end{enumerate}

Then it follows, from the Kasparov product factorizations in Corollary \ref{cor:The boundary of Dirac is Dirac}, Proposition \ref{prop:pairing.of.r.and.a.function} / \ref{prop:map.on.K1}, and associativity of the Kasparov product that
\begin{eqnarray*}
  \Ind (D_M + iV):=[D_M + iV]&=&[V]\kasparovproduct_{C_0(M)}[D_M]\\
                                  &=&[f]\kasparovproduct_{C(N)}[r]\kasparovproduct_{C_0(M)}[D_M]\\
                                  &=&[f]\kasparovproduct_{C(N)}[D_N] .
                                \end{eqnarray*}
This shows that:
\begin{proposition} Let $M$ be a warped cone of nowhere positive curvature.
  Let $N$ denote its collar. Let $\Gamma$ be an amenable discrete group acting by oriented isometries on
  $0\to C(N)\to C_v (M) \to C_0(M)\to0,$ with a common fixed point in $N.$ Let $D_N$ and $D_M$ denote Dirac operators
  on $N$ and $M$ respectively, and let $f$ denote a potential on $N.$ Then
  $[D_M+i(r+\phi(f))]\in\KKg(\C,\C)$ factorizes equivariantly
  as $[D_N]\kasparovproduct_{C(N)} [f],$ an equivariant K-theory and K-homology pairing over the compact manifold $N.$ \end{proposition}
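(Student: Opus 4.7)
The plan is to chain together three Kasparov product factorizations that are already established earlier in the paper, and then to invoke associativity. Write $V := r + \phi(f)$, with $\phi$ the approximately equivariant homomorphism from \cref{prop:phiForSymmetricCase}. First I would verify that $[D_M+iV]$ is a legitimate element of $\KKg(\C,\C)$: since $\Gamma$ acts on the warped cone by isometries preserving its radial structure the function $r$ is $\Gamma$-invariant, and $\alpha_\gamma(\phi(f))-\phi(\alpha_\gamma(f))\in C_0(M)$ for each $\gamma$ by \cref{prop:phiForSymmetricCase}. Hence $\alpha_\gamma(V)-V$ is bounded for every group element, and \cref{lem:resolvent.compact.equivariantly} furnishes the approximately equivariant unbounded cycle $[D_M+iV]$.

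Next I would assemble the three factorizations. \cref{lem:c0pairing.equiv}, applied with the homomorphism $\phi$ of \cref{prop:phiForSymmetricCase} in place of the hyperbolic one, gives $[V]\kasparovproduct_{C_0(M)}[D_M]=[D_M+iV]$. \cref{prop:pairing.of.r.and.a.function}, or its odd-degree variant \cref{prop:map.on.K1}, gives $[f]\kasparovproduct_{C(N)}[r]=[V]$, identifying $[V]$ as the image of the K-theory class $[f]$ under the boundary map associated with $[r]$. Finally, \cref{cor:The boundary of Dirac is Dirac.II} gives $[r]\kasparovproduct_{C_0(M)}[D_M]=[D_N]$.

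Invoking associativity of the Kasparov product over the intermediate algebras $C(N)$ and $C_0(M)$, the three identities combine to give
\begin{align*}
[D_M+iV]
&=[V]\kasparovproduct_{C_0(M)}[D_M]\\
&=\bigl([f]\kasparovproduct_{C(N)}[r]\bigr)\kasparovproduct_{C_0(M)}[D_M]\\
&=[f]\kasparovproduct_{C(N)}\bigl([r]\kasparovproduct_{C_0(M)}[D_M]\bigr)\\
&=[f]\kasparovproduct_{C(N)}[D_N],
\end{align*}
which is the claimed equivariant factorization.

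The main obstacle, which is precisely the reason for the framework developed in \cref{sect:approx.equivariance}, is that the cycle $[r]$ is only approximately equivariant; one has to justify that each of the three product identities holds in $\KKg$ rather than merely in $\KK$ after forgetting the group action. This is taken care of by \cref{prop:approx.equiv.phi} together with the remark at the end of \cref{sect:approx.equivariance} to the effect that the connection and semiboundedness conditions for a Kasparov product are insensitive to the group action. Granting these points, the proof reduces to the bookkeeping exhibited above.
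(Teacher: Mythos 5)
Your proposal is correct and follows essentially the same route as the paper: the paper likewise lists the classes $[D_M]$, $[D_N]$, $[r]$, $[f]$, $[V]$, checks via \cref{lem:resolvent.compact.equivariantly} that $[D_M+iV]$ is a legitimate cycle, and then chains $[D_M+iV]=[V]\kasparovproduct_{C_0(M)}[D_M]=[f]\kasparovproduct_{C(N)}[r]\kasparovproduct_{C_0(M)}[D_M]=[f]\kasparovproduct_{C(N)}[D_N]$ using \cref{cor:The boundary of Dirac is Dirac.II}, \cref{prop:pairing.of.r.and.a.function}/\cref{prop:map.on.K1}, and associativity. Your additional remarks on why each identity persists in $\KKg$ (via \cref{prop:approx.equiv.phi} and the insensitivity of the connection conditions to the group action) match the paper's intent.
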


The above, then, is the basic form of an Anghel-type theorem that takes into account the restrictions
of an equivariant situation. Just as in the non-equivariant case, one can extend it to an apparently larger class of potentials, namely, the case of any $[D_M+iV]$ that is Kasparov homotopic to one of the above form. 

We have used hypotheses that are simple to state rather than aiming for maximal generality.
With regard to possible generalizations,
we should point out that the only nontrivial information from  Riemannian geometry that was used is:
\begin{itemize} \item Existence of Dirac operators, and
  \item The visibility space property of Hadamard manifolds (\emph{i.e.} the Cartan-Hadamard theorem).
\end{itemize}

The hypothesis of amenability  gives a pleasant proof of \cref{prop:equivariantL2} but can there be replaced by a more lattice-theoretical hypothesis: the existence of a $\Gamma$-invariant measure; as already pointed out in (the non-amenable case) of \cref{prop:equivariantL2}. As in  \cref{def:Gadmissible}, one could in the remainder of the work proceed by using cycles at the level of $\Psi_\Gamma(A,B).$

\appendix \section{Appendix: On strongly $\Gamma$-continuous unbounded regular operators}
\label{sect:appendix}

As a convenience to the reader,  we reproduce a hard-to-find known result in this appendix.

\begin{proposition}[\protect{\cite[Prop. 8.2]{kucerovsky.thesis}}] Let  $T$ be a regular operator on
a countably generated Hilbert module $
E$ and let $\Gamma$ be a locally compact Hausdorff group acting on $E$ by automorphisms. The following are equivalent:\begin{enumerate}
                                   \item  for each $e\in E$, the map \(g\mapsto\left(T-\alpha_g(T)\right)e\)
      is continuous as a map from $\Gamma$ into $E$; and
                                   \item the function  $g\mapsto \left(T-\alpha_g(T)\right)$ is in ${\mathcal L}(C(K,E))$ for every compact subset $K\subseteq \Gamma.$
                                 \end{enumerate}\label{pr:locally.bounded}
\end{proposition}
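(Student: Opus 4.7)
The proof splits naturally into two directions, and the direction (ii) $\Rightarrow$ (i) is the short one. Fix any $e \in E$ and any compact neighborhood $K$ of a chosen point of $\Gamma,$ and apply the given operator in $\mathcal{L}(C(K,E))$ to the constant function $f_e(g) \equiv e.$ Since $C(K,E)$ consists of continuous $E$-valued functions, the image $g \mapsto (T - \alpha_g(T))e$ is continuous on $K,$ and varying the base point yields continuity on all of $\Gamma.$

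The substantive direction is (i) $\Rightarrow$ (ii). Fix a compact $K \subseteq \Gamma.$ Hypothesis (i) implicitly asserts that each $T - \alpha_g(T)$ is everywhere defined on $E$ and closed, hence bounded by the closed graph theorem; moreover for each fixed $e$ the orbit map $g \mapsto (T - \alpha_g(T))e$ is continuous on the compact set $K$ and therefore has bounded image in $E.$ The Banach-Steinhaus principle in its Banach space form then yields a uniform operator bound $C_K$ on $\|T - \alpha_g(T)\|$ as $g$ ranges over $K.$ I would then define the candidate operator $\Phi$ on $C(K,E)$ pointwise by $(\Phi f)(g) := (T - \alpha_g(T)) f(g).$ That $\Phi f$ lies in $C(K,E)$ follows from the decomposition
\[
(\Phi f)(g) - (\Phi f)(g_0) = \bigl(T - \alpha_g(T)\bigr)\bigl(f(g) - f(g_0)\bigr) + \bigl(\alpha_{g_0}(T) - \alpha_g(T)\bigr) f(g_0),
\]
in which the first term vanishes as $g \to g_0$ by the uniform bound together with continuity of $f,$ while the second vanishes by the pointwise hypothesis (i). Boundedness of $\Phi$ with operator norm at most $C_K$ is then immediate.

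The remaining and main obstacle is adjointability of $\Phi$ as an operator on the Hilbert $C(K)$-module $C(K,E).$ Regularity of $T$ guarantees that $T^*$ is regular, and because $\Gamma$ acts by inner-product-preserving automorphisms one checks $\alpha_g(T)^* = \alpha_g(T^*);$ hypothesis (i) therefore transfers verbatim to $T^*$ in place of $T.$ Running the same Banach-Steinhaus construction for $T^*$ produces a bounded candidate $\Psi$ on $C(K,E).$ The required identity $\langle \Phi f, h\rangle_{C(K)} = \langle f, \Psi h\rangle_{C(K)}$ reduces pointwise to $\langle (T - \alpha_g(T)) f(g), h(g)\rangle = \langle f(g), (T^* - \alpha_g(T^*)) h(g)\rangle,$ which holds on the dense subspace of $C(K,E)$ whose pointwise values lie in the appropriate common domains, and then extends by continuity to all of $C(K,E).$ This identifies $\Psi$ as the adjoint of $\Phi,$ so $\Phi \in \mathcal{L}(C(K,E))$ as required.
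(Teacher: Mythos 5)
Your argument is correct in substance, but it reaches the key uniform bound by a genuinely different route than the paper. The paper invokes a Baire-category form of Banach--Steinhaus with the \emph{group} $\Gamma$ playing the role of the second-category space: the functions $p_y(g)=\lVert (T-\alpha_g(T))y\rVert/\lVert y\rVert$ are continuous in $g$ by hypothesis (i), so one first obtains only a single nonempty open set $B_0\subseteq\Gamma$ on which $\lVert T-\alpha_g(T)\rVert$ is uniformly bounded, and then a left-translation identity $\lVert L_{kb}\rVert=\lVert\alpha_k(L_b-L_{k^{-1}})\rVert$ together with a finite cover of $K$ by translates $kB_0$ finishes the job. You instead fix the compact set $K$, note that continuity of each orbit map on $K$ already gives pointwise boundedness of the family $\{T-\alpha_g(T)\}_{g\in K}$, and apply the classical uniform boundedness principle with the Banach space $E$ as the Baire space. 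Your version is more direct---it dispenses with the translation-and-covering step entirely---and your explicit check that $\Phi f\in C(K,E)$, via the two-term decomposition, makes precise something the paper leaves implicit.

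Two caveats. First, both you and the paper tacitly assume that each $T-\alpha_g(T)$ is a bounded operator on all of $E$; your appeal to the closed graph theorem would require the difference of two regular operators to be closed, which is not automatic, so this is really a standing assumption built into hypothesis (i) rather than something proved. Since the paper makes the same assumption (its cross-sections are ``bounded by $\lVert L_g\rVert$''), this is not a point against you. Second, your adjointability step asserts that hypothesis (i) ``transfers verbatim'' to $T^*$. That is not justified for a general regular $T$: strong continuity of $g\mapsto (T-\alpha_g(T))e$ yields only weak continuity of the adjoint family, whereas your candidate $\Psi$ needs $g\mapsto (T^*-\alpha_g(T^*))h(g)$ to be norm-continuous in order to land in $C(K,E)$. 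In the paper's applications $T$ is self-adjoint, so the issue evaporates there, and the paper's own proof stops at the uniform bound without discussing adjointability at all; but as written your transfer claim is a gap, and you should either add self-adjointness (or strong continuity of the adjoint family) as a hypothesis or prove it.
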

\begin{proof} It is clear that $ii)$ implies $i).$
 For the other direction, we need
to prove uniform boundedness over the compact set $K,$ which we do by using a prim\ae\!val form of the Banach--Steinhaus theorem:
\begin{theorem}[Banach--Steinhaus] Suppose that $P$ is a collection of continuous
functions $\{p_{\lambda}:X\longrightarrow {\mathbb R}\}_{\lambda\in\Lambda},$ and that $X$ is of second category.
Then if each cross-section $\lambda\longrightarrow p_{\lambda}(x)$ is bounded, there is an
nonempty open set $B_0\subset X$ with $p_{\lambda}(x)$ uniformly bounded for all
$\lambda\in\Lambda$ and $x\in B_0.$
\end{theorem}\ The above theorem has a standard proof, in
consequence of the definition of second category.
Now we return to the proof of the Proposition. Let
$L_g$ denote the given operator $T-\alpha_g(T).$
We apply the Banach--Steinhaus theorem to the
collection of functions $\{p_y:\Gamma\longrightarrow {\mathbb R}\}_{y\in E
\setminus \{0\}}$ defined by
$p_y(g)=\lVert L_gy\rVert/\lVert y\rVert.$ These functions are continuous because of
the hypothesis $i),$ and the cross--sections over $y\in E\setminus \{
0\},$ with
$g\in \Gamma$ fixed, are bounded by $\|L_g\|.$ A locally
compact Hausdorff space is a Baire space, so we conclude that there is an open neighborhood
$B_0\subset \Gamma$ with $\|L_g\|$ uniformly bounded on $B_0.$

We show that every point in $\Gamma$
will have a neighborhood upon which $\|L_g\|$ is uniformly bounded. Every
point in $\Gamma$ has a neighborhood given by a left translate $kB_0$ of
$B_0,$ and
\[\lVert L_{kb} \rVert=\lVert\alpha_{kb}(T)-T\rVert=\lVert\alpha_k(L_b-L_{k^{-1}})\rVert.\]
Hence $L_{kb}$ is uniformly bounded for all $b\in B_0,$ and we complete the proof by covering a given compact set $K$ with finitely many sets of the form $kB_0.$ \end{proof}
\begin{corollary} \label{def:Gadmissible.approx.locally.bounded.form}The set of unbounded approximately
  equivariant Fredholm
  modules $\Psi_\Gamma(A,B)$ is given by triples $(E,\phi ,D)$ where $E$ is a (graded)
  Hilbert $B$--module with $\Gamma$-action; $\phi :A\to\LE$ is a
  $*$--homomorphism; and $D$ is an
  unbounded regular degree one self--adjoint operator on $E$, such
  that
  \begin{enumerate}\item   the function $T-\alpha_g(T)$ is in ${\mathcal L}(C(K,E))$ for every compact subset $K\subseteq \Gamma;$
      \item the operator $(i+D)^{-1}$ is in $J_{\phi}$;
    \item the homomorphism $\phi$ satisfies $\alpha_g(\phi (a))-\phi (\alpha_g(a
          ))\in\KE,$ and
      \item for all $a$ in some dense subalgebra of $A,$ the
      commutator $[D,\phi (a)]$ is bounded on the domain of $D$.
  \end{enumerate}  \end{corollary}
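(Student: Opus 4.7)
The statement is a near-immediate consequence of \cref{pr:locally.bounded}. My plan is to observe that the list of four conditions defining $\Psi_\Gamma(A,B)$ in the corollary differs from the list in \cref{def:Gadmissible.approx} in exactly one place: condition $(i)$ in the corollary replaces condition $(i)$ of \cref{def:Gadmissible.approx}, namely the pointwise continuity requirement that $g\mapsto (D-\alpha_g(D))e$ be continuous from $\Gamma$ into $E$ for each $e\in E$. Conditions $(ii)$, $(iii)$ (or $(iii')$), and $(iv)$ are identical in both formulations, so there is nothing to check for those.

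The remaining task is therefore to verify that the two versions of condition $(i)$ define the same class of triples $(E,\phi,D)$. This is exactly the content of \cref{pr:locally.bounded}, applied with $T:=D$ (an unbounded regular operator on the countably generated Hilbert $B$-module $E$) and with the given $\Gamma$-action on $E$. That proposition states the equivalence of the pointwise norm-continuity of $g\mapsto (T-\alpha_g(T))e$ and the local boundedness condition $g\mapsto (T-\alpha_g(T))\in\adjointables(C(K,E))$ for every compact $K\subseteq\Gamma$. Substituting $T=D$ gives exactly the equivalence needed between the two forms of condition $(i)$.

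Consequently, the set of triples satisfying the four conditions of the corollary coincides as a set with the set $\Psi_\Gamma(A,B)$ of \cref{def:Gadmissible.approx}, and the corollary follows. There is no real obstacle here, since the only nontrivial ingredient — the Banach--Steinhaus style argument producing uniform boundedness on a neighborhood and then propagating it to arbitrary compact sets by left translation — has already been carried out in the proof of \cref{pr:locally.bounded}. The corollary is then simply a convenient restatement of \cref{def:Gadmissible.approx} in the locally-bounded form preferred in some applications.
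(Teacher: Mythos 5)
Your proposal is correct and matches the paper's intent exactly: the corollary carries no separate proof in the paper because it is meant to follow immediately from \cref{pr:locally.bounded} (applied with $T=D$) combined with \cref{def:Gadmissible.approx}, which is precisely the argument you give. The only cosmetic point worth noting is that the corollary's condition $(i)$ writes $T$ where it means $D$, and that applying \cref{pr:locally.bounded} uses the standing assumption that $E$ is countably generated, which you correctly flag.
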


\end{document}